\newtheorem{theorem}{Theorem}[section]
\newtheorem{lemma}[theorem]{Lemma}
\newtheorem{proposition}[theorem]{Proposition}
\newtheorem{corollary}[theorem]{Corollary}
\theoremstyle{definition}
\newtheorem{definition}[theorem]{Definition}
\newtheorem{example}[theorem]{Example}
\newtheorem{remark}[theorem]{Remark}
\numberwithin{equation}{section}
\newcommand{\mA}{\mathcal{A}}
\newcommand{\R}{\mathbb{R}}
\newcommand{\mS}{\mathbb{S}}
\newcommand{\mB}{\mathbb{B}}
\newcommand{\noi}{\noindent}
\newcommand{\ms}{\medskip}
\newcommand{\al}{\alpha}
\newcommand{\de}{\delta}
\newcommand{\la}{\lambda}
\newcommand{\Om}{\Omega}
\newcommand{\larrow}{\longrightarrow}
\newcommand{\ri}{\rightarrow}
\newcommand{\p}{\partial}
\newcommand{\sub}{\subseteq}
\newcommand{\set}{\setminus}
\newcommand{\by}{\times}
\newcommand{\co}{\overline{\textrm{co}}}
\newcommand{\inter}{\textrm{int}}
\newcommand{\bt}{\begin{theorem}}\newcommand{\et}{\end{theorem}}
\newcommand{\bd}{\begin{definition}}\newcommand{\ed}{\end{definition}}
\newcommand{\bl}{\begin{lemma}}\newcommand{\el}{\end{lemma}}
\newcommand{\beq}{\begin{equation}}\newcommand{\eeq}{\end{equation}}
\newcommand{\bc}{\begin{claim}}\newcommand{\ec}{\end{claim}}
\newcommand{\bex}{\begin{example}}\newcommand{\eex}{\end{example}}
\newcommand{\bcor}{\begin{corollary}}\newcommand{\ecor}{\end{corollary}}
\newcommand{\bp}{\begin{proof}}\newcommand{\ep}{\end{proof}}
\newcommand{\BPL}{\medskip \noindent \textbf{Proof of Lemma} }
\newcommand{\BPP}{\medskip \noindent \textbf{Proof of Proposition} }
\newcommand{\BPT}{\medskip \noindent \textbf{Proof of Theorem} }
\renewcommand{\d}{\ensuremath{\,\mathrm{d}}}
\newcommand{\Exp}{\mathbb{E}}
\providecommand{\qb}[1]{\ensuremath{\!\left[{#1}\right]}}
\providecommand{\qp}[1]{\ensuremath{\!\left({#1}\right)}}
\newcommand{\mathscript}{\mathscr}
\newcommand{\cE}{\ensuremath{\mathscript E}}
\newcommand{\cL}{\ensuremath{\mathscript L}}
\newcommand{\CC}{\ensuremath{\operatorname C}}%Continuous functions
\newcommand{\cont}[1]{\ensuremath{\CC^{#1}}}
\newcommand{\norm}[1]{\ensuremath{\left|#1\right|}}
\newcommand{\cF}{\ensuremath{\mathscript F}}
\newcommand{\cA}{\ensuremath{\mathscript A}}
\newcommand{\Transpose}[1]{\ensuremath{{#1}^{\transpose}}}
\renewcommand{\Transpose}[1]{\ensuremath{{#1}^{\top}}}
\begin{document}
\setcounter{page}{1}

\vspace*{2.0cm}
\title[PDE observations in Optimal Control with Vectorial Cost]{A review from the PDE viewpoint of Hamilton-Jacobi-Bellman Equations Arising in Optimal Control with Vectorial Cost}
\author[N. Katzourakis, T. Pryer]{Nikos Katzourakis$^{1,*}$ and Tristan Pryer$^{1}$}
\maketitle
\vspace*{-0.6cm}

\begin{center}
{\footnotesize

  $^1$Department of Mathematics and Statistics, University of Reading, Whiteknights, PO Box 220, RG6 6AX, Reading, UK
  
}\end{center}

\vskip 4mm {\footnotesize \noindent {\bf Abstract.}
  This paper is a review of results on Optimisation which are perhaps not so standard in the PDE realm. To this end, we consider the problem of deriving the PDEs associated to the optimal control of a system of either ODEs or SDEs with respect to a \emph{vector-valued} cost functional. Optimisation is considered with respect to a partial ordering generated by a given cone. Since in the vector case minima may not exist, we define vectorial value functions as (Pareto) minimals of the ordering. Our main objective is the derivation of the model PDEs which turn out to be parametric families of HJB single equations instead of systems of PDEs. However, this allows the use of the theory of Viscosity Solutions.

 \noindent {\bf Keywords.}
Hamilton-Jacobi-Bellman PDE, Optimal Control, Stochastic Optimal Control, Vectorial Cost Functional, Nash equilibrium, Pareto minimals.

 \noindent {\bf 2010 Mathematics Subject Classification.}
Primary 35F21, 49L20, 49L25, 93E20, 58E17; Secondary 93B52, 35Q93}

\renewcommand{\thefootnote}{}
\footnotetext{ $^*$Corresponding author.
\par
E-mail addresses: \email{n.katzourakis@reading.ac.uk} (N. Katzourakis), \email{t.pryer@reading.ac.uk} (T. Pryer).
\par
Received May 27, 2017; Accepted January 11, 2018. }

\section{Introduction} \label{section1}

Let $A\sub \R^m$ be a compact set, $F: \R^n \by A \larrow
\R^n$ and $\sigma: \R^n \by A \larrow
\R^{n\times m}$ continuous maps. For $x\in \R^n$, consider the following
initial value problem for a system of stochastic differential
equations (SDEs)
\beq
  \label{1.1}
  \left\{
\begin{array}{l}
    \d {\chi}(s) \,= \, F\big(\chi(s),\al(s)\big) \d s\ +\ \sigma\big(\chi(s),\al(s)\big) \d W(s), \quad t<s<T,\ms
    \\
 \ \ \,   \chi(t) \,=\, x,
  \end{array}
  \right.
\eeq
in the It$\hat{\textrm{o}}$ sense. Here $\al$ is a measurable map is the class $\mA$,
where \beq \label{1.2} \mA\, :=\, \Big\{ \al \in L^\infty(0,T)\ \big|
\ \al(t)\in A, \text{ for a.e. }t\in(0,T)\Big\} \eeq and $W(s)$ is an
$m$ dimensional system of independent Wiener processes. Note that in
the case $\sigma = 0$ the problem reduces to a system of ordinary
differential equations (ODEs)
 \beq \label{1.1a} \left\{
\begin{array}{l}
\dot{\chi}(s)\, = \, F\big(\chi(s),\al(s) \big), \ \ \ t<s<T,\ms\\
\, \chi(t)\, =\, x.
\end{array}
\right.
\eeq

In this paper we consider the problem of deriving the PDE associated to the optimal control of system \eqref{1.1} with respect to the \emph{vectorial cost functional}

\beq \label{1.3}
C_{x,t}[\al]\ :=\ \Exp \qb{
  g\big( \chi(T)\big)\ +\ \int_t^T h\big(\chi(s),\al(s)\big)\, ds
}
\eeq
where 
\beq  \label{1.4}
h\ : \ \R^n \by A \larrow \R^N,\ \ \ g\ : \ \R^n \larrow \R^N,
\eeq
are given maps, called the \emph{running cost} and the \emph{terminal
  cost} respectively and $\Exp$ is the expectation defined with respect to the measure induced by the stochastic basis. In \eqref{1.3} $\chi$ denotes the
stochastic (deterministic) flow map of \eqref{1.1} when $\sigma
\not\equiv 0$ ($\sigma \equiv 0$) respectively, having suppressed the
dependence in $x,\al$:
\beq  \label{1.5}
\chi(s)\ \equiv\ \chi \big(s,x,\al(s) \big).
\eeq
We would like to clarify that \textit{this is primarily a review paper which is aimed at PDE theorists who may not be experts of control theory. In particular, Section \ref{section2} is a review of results standard in the community of Optimisation which are lesser known in the PDE community. However, Sections \ref{section3} and \ref{section4} contain seemingly new results as we explain below. Our main objective is to derive the Hamilton-Jacobi and Hamilton-Jacobi-Bellman equations which are associated to the problem of vectorial optimisation with or without noise. It is a remarkable fact that instead of obtaining systems of HJB equations as one would expect in the vectorial case, we actually obtain parametric families of single equations via the method of scalarisation. Since the equations turn out to be single and not systems (but with parameters), we invoke the Crandall-Ishii-Lions theory of Viscosity Solutions. Further, we do not discuss the much more delicate question of uniqueness.}

In the sequel we will assume that $F,g,h,\sigma$ are bounded and Lipschitz continuous with respect to $x\in \R^n$, uniformly with respect to $a\in A$:
\begin{gather} \label{1.6}
  %\begin{split}
  \left.
    \begin{array}{r}
      |F(x,a)|,\ |g(x)|,\ |h(x,a)|,\ \norm{\sigma(x,a)}\ \leq\ C, \ms \\ 
      |F(x,a)-F(y,a)|\ \leq\ C|x-y|, \ms\\
      |g(x)-g(y)|\ \leq\ C|x-y|,\ms\\
      |h(x,a)-h(y,a)|\ \leq\ C|x-y|, \ms\\
      |\sigma(x,a)-\sigma(y,a)|\ \leq\ C|x-y|, \ms
    \end{array}
    \right \}
    \text{ for all }x,y\in \R^n,\ a\in A.
  %\end{split}
\end{gather}
For simplicity in the exposition we have made the \emph{simplifying assumption} that $\sigma$ is not matrix valued. Moreover, the bounds we assume are also non-optimal in order to allow us to focus on the main ideas rather than on technical complications. 

In both the deterministic and stochastic cases of \emph{scalar cost
  functional}, namely when $N=1$, this problem is standard in Control
Theory and there is an extensive literature, see for instance Evans
\cite{E}, Fleming-Soner \cite{FS}, Baldi-Capuzzo Dolcetta \cite{BCD},
Lions \cite{L}, Fleming-Rishel \cite{FR} and Kenneth \cite{Ke}. When $C_{x,t}[\al] \in [0,\infty]$, optimisation with respect to this scalar cost functional is unambiguous: one seeks to
minimise \eqref{1.3} over all admissible controls $\al \in \mA$. Along
the lines of (ordinary) dynamic programming, one may define the
\emph{value function} \beq \label{1.7} u(x,t)\ :=\ \inf_{\al \in \mA}
C_{x,t}[\al], \ \ \ x\in \R^n,\, 0<t<T.  \eeq It then follows by
standard PDE theory (see e.g.\ \cite{E} for the case of vanishing white noise) that under assumption
\eqref{1.6} the value function is Lipschitz continuous and solves an
initial value problem for a Hamilton--Jacobi--Bellman (HJB) PDE
\beq \label{1.8} 
\left\{
\begin{array}{r}
u_t\ +\ H(\cdot,Du,D^2 u)\, =\, 0, \ \text{ in }\R^n\by (0,T),\ms\\
u\, =\, g,\ \text{ on }\R^n\by \{0\},\ \ 
\end{array}
\right.
\eeq
in the Viscosity sense (see e.g. \cite{CIL} for the theory of viscosity solutions, and for a more elementary introduction we refer to \cite{K}).  Here the Hamiltonian $H$ is the function defined by 
\beq \label{1.9} 
H(x,p,Z)\, := \ \min_{a\in
  A}\left\{\frac{1}{2}\sigma(x,a) \Transpose{\sigma(x,a)} : Z \ +\ F(x,a)\cdot
p\ +\ h(x,a) \right\}.  
\eeq 
The HJB equation can be utilised to construct a \emph{feedback control
  $\al^*$} which optimally drives the dynamics of the flow generated
by the system \eqref{1.1} and minimises both the running cost and the
terminal cost. Roughly, at points of differentiability of $u$, $\al^*$
is defined by selecting for each $t<s<T$ the $\al(s)$ which realises
the minimum:
\[
  \begin{split}
    H\Big( \chi^*(s), Du\big(\chi^*(s),&\al^*(s)\big), D^2u\big(\chi^*(s),\al^*(s)\big) \Big)
    \\
    &=\
    \frac{1}{2}\sigma \big(\chi^*(s),\al^*(s)\big) \Transpose{\sigma \big(\chi^*(s),\al^*(s)\big)} : D^2 u\big(s,\al^*(s)\big) 
    \ms\\
    &\ \ \ \ +\,
    F\big(\chi^*(s),\al^*(s)\big) \cdot Du\big(\chi^*(s),\al^*(s)\big)\ 
    +
    \ h\big(\chi^*(s),\al^*(s)\big),
  \end{split}
\]
where 
\[
\chi^*(s)\ :=\ \chi \big(s,x,\al^*(s) \big).
\]
Conversely in the deterministic case, given any HJ equation $u_t+H(\cdot,Du)=0$ with $H(x,p)$ concave in $p$, one can always relate it to a scalar optimisation problem for an ODE system of the form \eqref{1.1a} for some deterministic cost functional, by using the fact that concave functions can be written as infima of a family of affine functions.

In this paper we consider instead the case of $N\geq 2$ and we seek to
optimise the vectorial cost functional \eqref{1.3}. Vectorial Optimal Control and vectorial Dynamic Programming are extremely important in applications and have been
extensively studied in the last 50 years, mostly in connection to
real-world applications like in Economics/Finance, Mechanics/Engineering, Aeronautics, Automotive industry etc, see e.g.\ Guigue \cite{G}, Bellman-Fan \cite{BF}, Chen-Huang-Yang \cite{CHY}, Debreu \cite{D}, Henig \cite{H}, Isii \cite{I}, Luenberger \cite{L1, L2, L3, L4}, Olech
\cite{O}, Salukvadze \cite{S}, Pareto \cite{P}, Boyd-Vanderberghe
\cite{BV} and references therein. 

In the vectorial case, ones needs to be very careful regarding the
meaning of ``minimise the vector cost". In (finite-dimensional) Optimisation theory (see e.g.\ \cite{BV}), it is fairly standard to consider minimisation with respect to a \emph{partial ordering ``$\leq_K$" generated by a convex cone $K\sub \R^N$} with
some further properties, that is for $\xi,\eta \in \R^N$, we define
\[
\xi \, \leq_K \eta\ \ \ \Longleftrightarrow\ \ \ \eta-\xi \, \in \, K.
\]
Vector-valued optimisation is extremely important in applications, and there a very active current research on the topic, mostly in connection to multi-criterion optimisation and Nash equilibria, see for instance \cite{DD, M, MGGJ, GC, RBG, BKR, RK}. Let us also note that there exists a large number of contributions in Game Theory which are closely related to the problem considered here. For instance, two well known references in the area are by Basar and Olsder \cite{BO} and by Abou-Kandil, Freiling, Ionescu and Jank \cite{AFIJ}, that also contain numerous relevant references. In some of these works, the problem addressed herein has been widely investigated but only in particular \emph{Linear-Quadratic} case. In this case the theory has been well established, with or without stochastic terms. The equations then simplify to coupled \emph{Riccati equations} that are exactly HJB in the linear quadratic context. We will summarise this issue further in the sequel.
%% More precisely, in the linear quadratic case the control system \eqref{1.3} is linear and takes the form
%% \[
%% \dot\chi(s) \,= \, A \chi(s)\, +\, B \alpha(s)
%% \]
%% with $A$ and $B$ matrices, while the cost functional is quadratic. \textit{In this special  case, the theory is quite complete but to the best of our knowledge the problem we consider herein has not been considered before.}

In the scalar case we typically have $K=[0,\infty)$, in the case of symmetric $N\by N$ matrices one might take 
\[
K\, =\, \mS(N)^+\,=\,\big\{A \in \R^{N\by N}\, | \ A=A^\top \geq 0\big\}.
\]
In the case of $\R^N$, one simple choice of cone could be 
\[
K\,=\,\R^N_+\,=\,\big\{\xi \in \R^N \ | \ \, \xi_i \geq0,\ 1\leq i \leq N \big\},
\]
which results in the \emph{component-wise ordering} of $\R^N$. The
choice of ordering is determined by the priority of objectives in the
case of cost functionals with multi-dimensional range. A typical difficulty of
the vectorial case is that \emph{minima of the partial ordering may
  not exist}, and except for some prominent (but otherwise
ill-behaved) orderings like the lexicographic ordering, this is
usually the case. By minimum with respect to the ordering $\leq_K$
over a set $S \sub \R^N$ we mean a point $\xi \in S$ satisfying
\[
\xi\, \leq_K \eta, \ \ \text{ for all }\eta \in S.
\] 
(We do not define the vectorial ``inf", but this can be done in the obvious way.) The way to overcome this difficulty is to seek instead for \emph{minimals}, usually called \emph{Pareto Minimals} (\cite{P}). A point $\xi \in S$ is a (Pareto) Minimal of the set $S\sub \R^N$ with respect to the ordering $\leq_K$ when 
\[
\text{ For any } \eta \in S \ :\   \eta\, \leq_K \xi \ \ \ \Longrightarrow \ \ \ \ \eta\, =\, \xi.
\] 
Minima and minimals coincide for global (linear) orderings, but in general \emph{they do not}. Unlike minima, minimals always exist and correspond to choices for which ``there is no better available choice", while minima, if they exist, correspond to ``the best available choice".  Once again, this distinction has no bearing in the case of linear orderings. A well known method in order to construct minimals of a partial ordering is the so-called \emph{scalarisation method} which is recalled later. Roughly, the idea of scalarisation is that 

{

\center{

\emph{a partial ordering can be recovered from a family of scalar orderings along projections on lines generated by the direction in the dual cone $K^*\sub \R^N$}.}

}
\smallskip

\noi By using scalarisation, one can construct a \emph{manifold of
  Pareto minimals} which corresponds to the manifold of ``unimprovable
choices'' and, motivated by the applications in Financial Mathematics,
is usually called \emph{the trade-off manifold} (``manifold'' here is
meant in the loose and not the strict mathematical sense, since it
may lack the usual locally euclidean structure).

In this paper we commence a program which is along the lines of the
scalar theory. Our central goal is to identify the appropriate vectorial extension of the concept of value function and derive the respective vectorial analogues of the HJ and HJB equations which are connected to the deterministic and stochastic control problems. The solutions of these PDEs would allow to construct feedback controls which optimally drive the system \eqref{1.1}.  To the best of our knowledge, this line of development via PDE theory has not been pursued before. 

Interestingly, it turns out that, via the method of scalarisation, instead of a system of HJ or HJB equations as one might expect due to the vectorial nature of the
cost, we obtain a \emph{parametric family of HJ/HJB equations, where the parameters
  $\la$ are the unit directions inside the dual cone $K^*$ relative to
  the selected partial ordering}. The respective Viscosity Solutions of these HJ/HJB equations are projections of the family of vectorial value functions $\{u^\la\}$ along directions normal to certain supporting hyperplanes. The value functions are Pareto minimals with respect to the ordering and form the \emph{trade-off manifold inside the space of maps $\, \R^n\by(0,\infty) \larrow \R^N$}. This manifold gives rise to a respective manifold of feedback
controls $\{\al^\la\}$. The study of the topological structure of
these manifolds of optimal choices seems to be an interesting topic in
itself, but will not be considered in this introductory work.

This paper is organised as follows: In Section \ref{section2} we collect basic facts about cones, orderings, minimals and viscosity solutions, including the scalarisation method and the existence of Pareto minimals in the case of (finite-dimensional) Optimisation theory. 

In Section \ref{section3} we consider the case of optimally controlling the system \eqref{1.1a} without white noise with respect to the deterministic version of the vectorial functional \eqref{1.4}. By introducing the appropriate value functions as (Pareto) minimals of the cost with respect to a fixed ordering (Definition \ref{def-PM}), we prove their existence as a consequence of the scalarisation method (Lemma \ref{lemma-PM}). Next, we derive the analogue of the Hamilton--Jacobi equation which arises in the deterministic vector case and show that appropriate projections of the value functions along lines generated by the dual cone are viscosity solutions of a family of HJ equations (Theorem \ref{theorem1} and Propositions \ref{prop:dynamic-opt-det} and \ref{prop:exist-det}) parameterised by the directions in the dual cone.  

 In Section \ref{section4} we turn our attention to the problem of stochastic optimal control of \eqref{1.1} via PDE theory, and extend the results of Section \ref{section3} to the case of non-trivial white noise. The results of this section are in correspondence to those of Section \ref{section3}, the main difference being that here we have a family of 2nd order Hamilton--Jacobi--Bellman equations parameterised by the directions inside the dual cone and whose solutions optimally drive the system \eqref{1.1}.

Finally, in Section \ref{sec:applications} we examine particular applications of the theory to Linear-Quadratic models, showing that for the vectorial cost functional in certain directions the HJB problem can be reduced to solving a one-parameter family of matrix valued Riccati equations.
 
\ms

\section{Cones, Generalised Ordering, Minimals and Viscosity Solutions} \label{section2}

In this section we collect some rudimentary material related to
generalised ordering, cones, minima, minimals, scalarisation and
viscosity solutions. These notions and results we recall herein can be
found in different guises sparsely distributed inside our references (and mostly proofless). We recall them here for the sake of completeness of the exposition and
for the convenience of the reader. 

\subsection{Generalised Orderings with Respect to Cones.} Let $K\sub \R^N$ be a non-empty set. $K$ is called a \emph{cone} when 
\[
\text{$\xi \in K$ \  implies \ \ $t \xi \in K$,\ \  for any $t\geq 0$,}
\]
that is when 
\[
\text{$tK = K$, \ \ for all $t > 0$.}
\]
A cone $K$ is called a \emph{Proper Cone} when
\begin{itemize}
\item $K$ is a closed convex set, \ms

\item the topological interior of $K$ is non-empty: $\inter(K)\neq \emptyset$, \ms

\item $K$ contains no line: $\xi \in K$ and $-\xi \in K$ implies $\xi=0$. 

\end{itemize}

\ms

\noi Some examples of proper cones are the ones given in the introduction, that is

\begin{itemize}
\item $K=[0,\infty)$, in $\R^1$, \ms

\item $K=\mS(N)^+$,  in $\R^{N\by N}$, \ms

\item $K=\R^N_+$, in $\R^N$.

\end{itemize}

\noi However, the lexicographic cone $K_{\text{lex}}\sub \R^N$, defined by
\begin{align}
K_{\text{lex}}\, :=\ &\{0\} \bigcup \Big\{\xi \in \R^N\ \Big| \ \xi_i>0,\, \forall i\in\{1,...,N\} \Big\} \bigcup\nonumber\\
&\Big\{ \xi \in \R^N\ \Big| \ \exists k \in\{1,...,N-1\} \, :\, \xi_1=\cdots =\xi_k=0, \, \xi_{k+1}>0 \Big\}
\nonumber
\end{align}
is not a proper cone. Every proper cone $K\sub \R^N$ induces a \emph{partial ordering} ``$\geq_K$", given by
\[
\eta \, \geq_K \xi\ \ \ \Longleftrightarrow\ \ \ \eta-\xi \, \in \, K.
\] 
Obviously, $\eta \, \leq_K \xi$ means $-\eta \, \geq_K -\xi$. The respective \emph{strict ordering ``$>_K$"} is defined analogously:
\[
\eta \, >_K \xi\ \ \ \Longleftrightarrow\ \ \ \eta-\xi \, \in \, \inter(K),
\] 
but will not be used in this work. Properness of the cone implies that the relation $\geq_K\ \sub \R^N\! \by \R^N$ is actually a partial ordering compatible with the topological and linear structure of $\R^N$: 
\[
\underset{\text{Figure 1.}}{\includegraphics[scale=0.2]{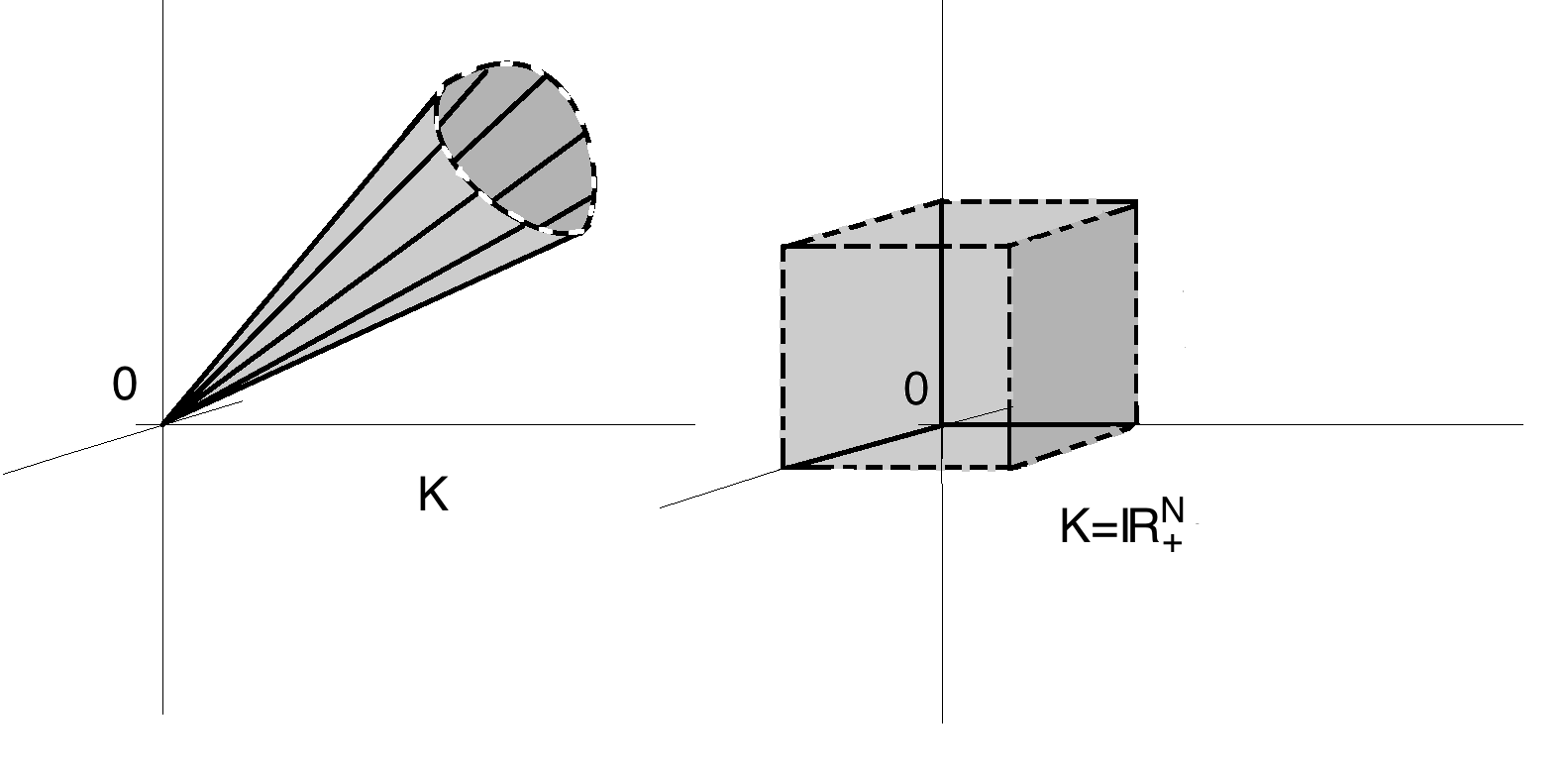}}
\]
\begin{itemize}

\item $\xi \, \leq_K \xi$,\ms

\item $\xi \, \leq_K \eta $ and $\eta \, \leq_K \zeta $ imply $\xi \, \leq_K \zeta $, \ms

\item $\xi \, \leq_K \eta $ and $\xi \, \geq_K \eta$ imply $\xi=\eta$, \ms

\item $\xi \, \leq_K \eta $ and $t\geq0$ imply $t\xi \, \leq_K t\eta $, \ms

\item $\xi_m \, \leq_K \eta_m $ and $\xi_m \ri \xi$, $\eta_m \ri \eta$ as $m\ri \infty$, imply $\xi \, \leq_K \eta $, \ms

\item $\xi' \, \leq_K \eta'$ and  $\xi'' \, \leq_K \eta''$ imply $\xi'+\xi'' \, \leq_K \eta'+\eta'' $. \ms

\end{itemize}

\subsection{Mimina, Minimals and their Geometric Interpretation.}

Let $S \sub \R^N$ be a non-empty set and suppose we are given a partial ordering ``$\leq_K$'' generated by a proper cone $K\sub \R^N$. By mimicking the scalar case, one may define the minimum of $S$ with respect to the ordering ``$\leq_K$''  as a point $\xi \in S$ such that
\[
\xi \, =\, \min \, S \ \ \ \Longleftrightarrow  \ \ \  \xi\, \leq_K \eta, \ \ \text{ for all }\eta \in S.
\] 
One may also define the infimum of the set $S$ as the minimum of the closure $\overline{S}$ of $S$, that is as the point $\xi\in \R^N$ such that
\[
\xi \, =\, \inf \, S \ \ \ \Longleftrightarrow  \ \ \  \xi \, =\, \min \, \overline{S}.
\] 
The minimum, if is exists, it is unique. In a more compact form, its definition reads
\[
\xi \, =\, \min \, S \ \ \ \Longleftrightarrow  \ \ \  S\, \sub \, \xi\, +\, K,
\]
that is, $\xi$ is the minimum of $S$ if and only if it is contained in the translate of the cone $K$ with vertex at $\xi$. The basic problem for the notion of minimum is that in general does not exist since only sets with very special structure possess it. The way to overcome this difficulty is to seek instead for \emph{(Pareto) Minimals}. A point $\xi \in S$ is a (Pareto) Minimal of the set $S\sub \R^N$ with respect to the ordering $\leq_K$ when it satisfies
\[
\left.
\begin{array}{c}
\eta \in S \text{ and }\\
\eta\, \leq_K \xi
\end{array}
\right\}
\ \ \ \Longrightarrow \ \ \ \ \eta\, =\, \xi.
\] 
\[
\underset{\text{Figure 2.}}{\includegraphics[scale=0.2]{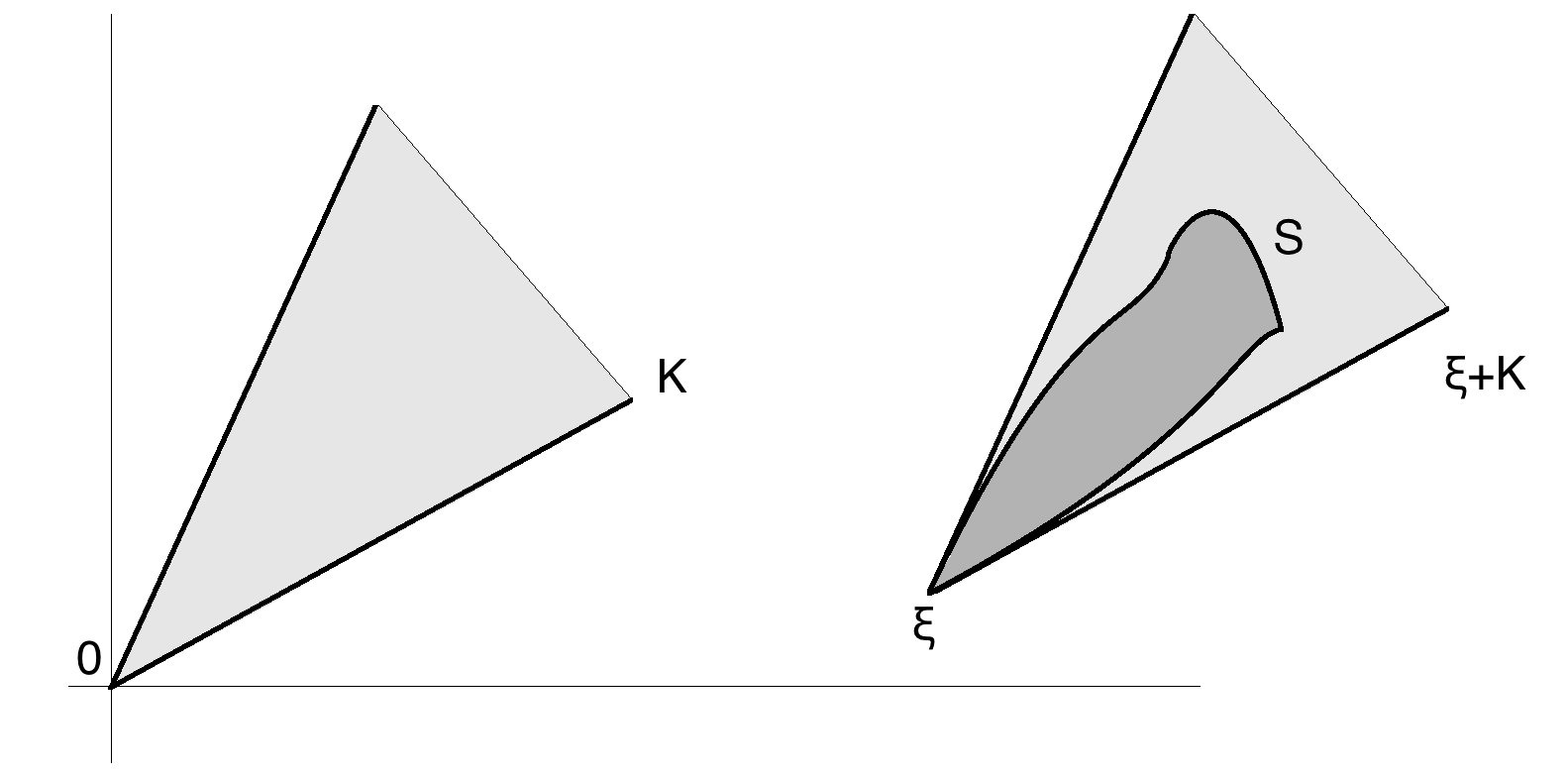}}
\]
Minima and minimals coincide for global (linear) orderings, but in general \emph{they do not}. The geometric characterisation of minimals is 
\[
(\xi\,-\,K) \cap S\, =\, \{\xi\},
\]
that is, $\xi$ is a minimal of $S$ if and only if the reflected translated cone $\xi-K$ with vertex at $\xi$ intersects $S$ only at $\xi$. Obviously one can more generally define the minimal of a set $S$ as a point $\xi$ \emph{not necessarily contained in $S$} by considering the closure $\overline{S}$ in the place of $S$, but we will not go into that.

\[
\underset{\text{Figure 3.}}{\includegraphics[scale=0.2]{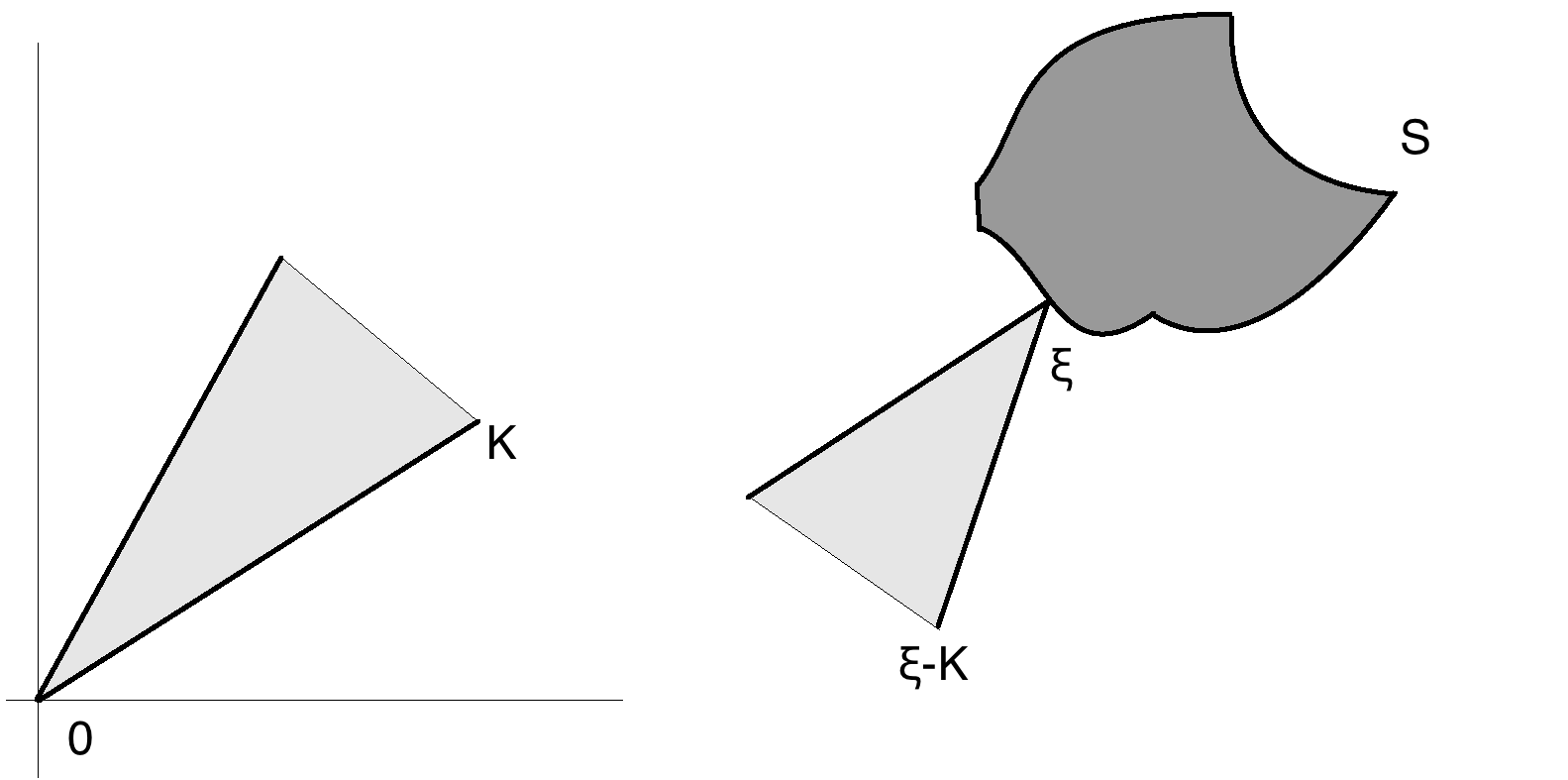}}
\]
We will shortly see that any closed set $S$ possesses at least one minimal element with respect to an ordering generated by a proper cone.

\subsection{Dual Cones and Dual Inequalities.}  A central concept in this context is duality. Given a cone $K\sub \R^N$, we define its dual cone as
\[
K^*\, :=\, \Big\{ \eta \in \R^N\ \, \big|\, \ \eta\cdot \xi \,\geq\, 0, \ \forall\, \xi \in K   \Big\}.
\]
Geometrically, $\eta \in K^*$ if and only if $\eta$ is the inwards
pointing vector to a halfspace supporting $K$ at the origin (see
Figure 4). As usual, dual objects satisfy better properties than the
objects themselves. A cone is called \emph{self-dual} if it coincides
with its dual $K=K^*$. Simple properties of dual cones are

\begin{itemize}
\item $K^*$ is closed and convex (although $K$ might not be), \ms

\item $K'\sub K''$ implies ${K''}^*\sub {K'}^*$, \ms

\item if $\inter(K) \neq \emptyset$, then $K^*$ contains no non-trivial line,\ms

\item $K^{**}$ coincides with the closed convex hull of $K$:
\[
K^{**}\, =\, \co\,(K)
\]
\end{itemize}
The standard examples of cones given in the introduction are proper and self-dual:
\begin{itemize}
\item $[0,\infty)\, =\, \left( [0,\infty) \right)^*$, in $\R^1$, \ms

\item $\mS(N)^+ \, =\, \left( \mS(N)^+ \right)^*$,  in $\R^{N\by N}$, \ms

\item $\R^N_+ \, =\, \left( \R^N_+ \right)^*$, in $\R^N$.\ms

\end{itemize}
\[
\underset{\text{Figure 4.}}{\includegraphics[scale=0.19]{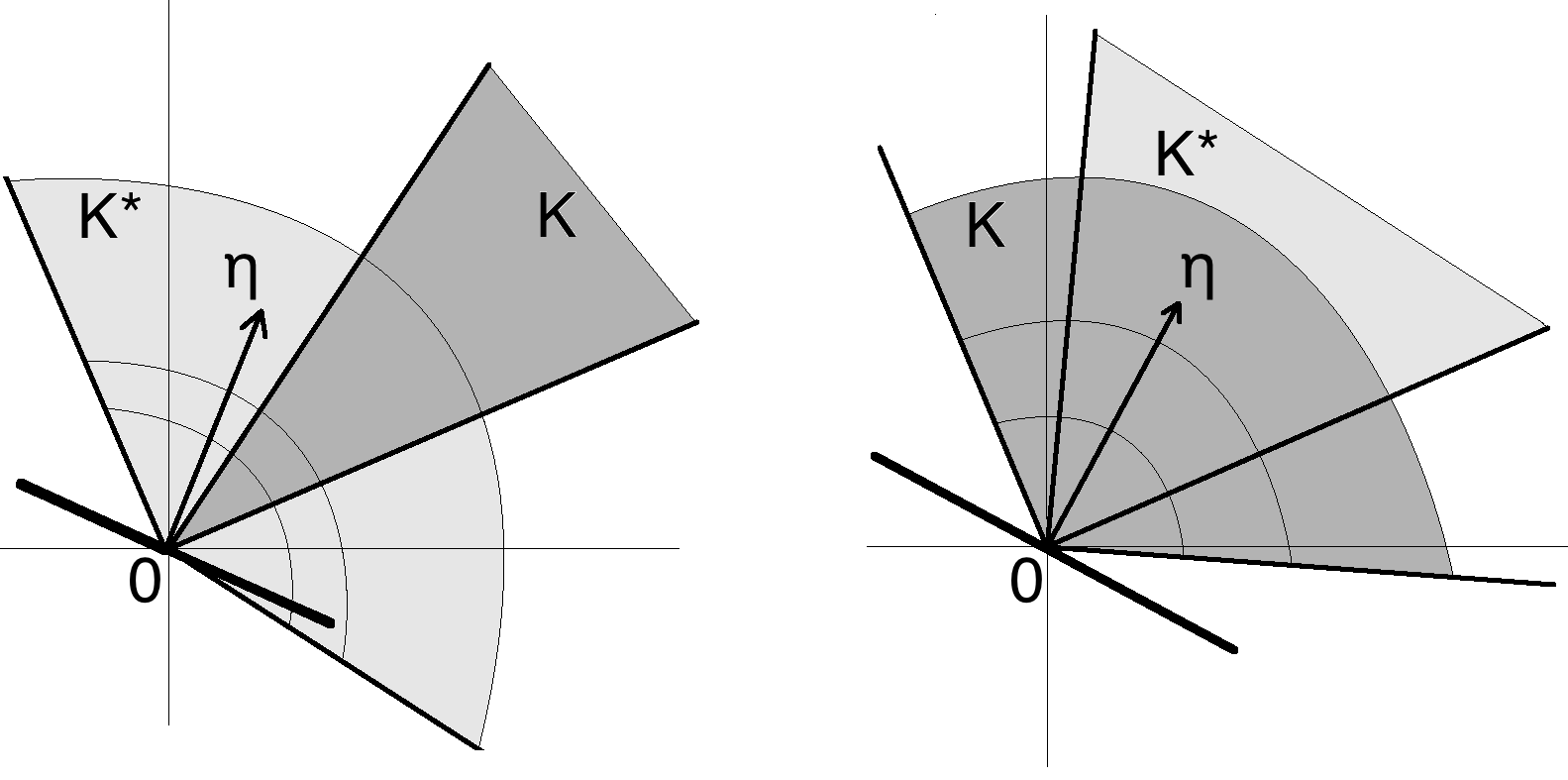}}
\]
The only less trivial equality is the middle one, and follows by observing that $A\geq 0$ in $\mS(N)$ if and only $A:B:=A_{ij}B_{ij}\geq 0$ for all $B\in \mS(N)^+$.

The dual cone induces a partial ordering itself on $\R^N$, in general different from the ordering induced by $K$, defined as
\[
\xi \, \geq_{K^*} \eta \ \ \ \Longleftrightarrow \ \ \ \xi\,-\,\eta \, \in \, K^*.
\]
One of the main utilities of the dual objects is that they \emph{allow to characterise the ordering via a family of ordinary scalar orderings with respect to projections on the lines generated by directions in the dual cones}. Accordingly, we have
\begin{lemma}[Orderings via duality] \label{lemma1} Let $K\sub \R^N$ be a proper cone and $K^*$ its dual. Then, for any $\xi,\eta \in \R^N$, we have the equivalence
\[
\xi \, \geq_{K} \eta \ \ \ \Longleftrightarrow \ \ \ \la\cdot \xi \, \geq \, \la\cdot\eta, \ \text{ for all }\, \la  \in  K^*.
\]
\end{lemma}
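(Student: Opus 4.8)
The plan is to strip the statement down to a claim about cone membership and then invoke duality. Writing $\zeta := \xi - \eta$, the left-hand side $\xi \geq_K \eta$ is by definition exactly $\zeta \in K$, while the right-hand side --- that $\lambda \cdot \xi \geq \lambda \cdot \eta$ for every $\lambda \in K^*$ --- is by linearity exactly $\lambda \cdot \zeta \geq 0$ for every $\lambda \in K^*$, which is precisely the statement $\zeta \in K^{**}$. So the lemma is equivalent to the identity $K = K^{**}$ for a proper cone $K$, and I would establish the two inclusions separately.

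The inclusion $K \subseteq K^{**}$ (equivalently, the forward implication ``$\Rightarrow$'') is immediate from the definition of the dual cone: if $\zeta \in K$ then $\lambda \cdot \zeta \geq 0$ for every $\lambda \in K^*$, and this uses no hypothesis on $K$ beyond being a cone. For the reverse inclusion $K^{**} \subseteq K$ (the implication ``$\Leftarrow$''), I would cite the bidual identity recalled above, namely $K^{**} = \co(K)$, and then use that a proper cone is by assumption closed and convex, so that $\co(K) = K$. If a self-contained argument is preferred, I would instead argue by contradiction via the Hahn--Banach separation theorem: assuming $\zeta \notin K$, since $K$ is nonempty, closed and convex there exist $\nu \neq 0$ and $c \in \R$ with $\nu \cdot \zeta < c \leq \nu \cdot \xi'$ for all $\xi' \in K$; since $0 \in K$ we get $c \leq 0$, hence $\nu \cdot \zeta < 0$; and since $t\xi' \in K$ for all $t \geq 0$ and $\xi' \in K$, letting $t \to \infty$ in $t(\nu \cdot \xi') \geq c$ forces $\nu \cdot \xi' \geq 0$ on $K$, i.e.\ $\nu \in K^*$. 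Then $\nu \in K^*$ with $\nu \cdot \zeta < 0$ contradicts the hypothesis, so $\zeta \in K$.

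The only substantive ingredient is the separation step (equivalently, the bidual identity $K^{**} = \co(K)$), which depends on closedness and convexity of $K$; properness enters only to guarantee these two properties, and the rest --- the translation between $\zeta$ and $\xi-\eta$, and the homogeneity bookkeeping showing the separating functional lies in $K^*$ --- is routine. I do not anticipate any real obstacle beyond invoking the separation theorem carefully.
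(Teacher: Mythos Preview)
Your proof is correct and in fact cleaner than the paper's. You recognize the statement as the bidual identity $K=K^{**}$ and either invoke it directly (the paper has already recorded $K^{**}=\co(K)$ among the basic properties of dual cones, so for a closed convex $K$ this gives $K^{**}=K$ immediately) or reprove it via Hahn--Banach separation, producing a separating functional $\nu\in K^*$ with $\nu\cdot\zeta<0$.

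The paper takes a more hands-on geometric route for the implication ``$\Leftarrow$'': assuming $\zeta:=\xi-\eta\notin K$, it considers the metric projection $\textup{Proj}_K(\zeta)$, works in the two-dimensional plane $\Pi$ spanned by $\zeta$ and $\textup{Proj}_K(\zeta)$, and rotates $\textup{Proj}_K(\zeta)$ by $\pi/2$ within $\Pi$ to produce an explicit $\lambda_0\in K^*$ with $\lambda_0\cdot\zeta<0$. Both arguments have the same skeleton --- find a supporting hyperplane to $K$ that strictly separates $\zeta$ --- but your approach either cites the result outright or appeals to the standard separation theorem, while the paper constructs the separating vector by hand. Your version is shorter and more robust (and makes transparent that only closedness and convexity of $K$ are used); the paper's version is more pictorial but requires care to check that the rotated vector really lies in $K^*$.
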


\BPL \ref{lemma1}. We may assume $N\geq 2$, since the case $N=1$ is trivial. Fix $\xi,\eta \in \R^N$ and suppose first that $\xi \, \geq_{K} \eta$. By definition, this means $\xi-\eta \in K$. Hence, by definition of $K^*$, for any $\la \in K^*$ we have $\la\cdot (\xi -\eta) \geq 0$, which is what we want. 

Conversely, suppose that for any $\la \in K^*$ we have $\la\cdot (\xi -\eta) \geq 0$. For the sake of contradiction assume that $\xi-\eta \not\in K$. Since $K$ is a convex set, the projection  on $K$
\[
\text{Proj}_K\ :\ \R^N \larrow K,
\]
is uniquely defined. Since $\xi-\eta\not\in K$, we have $\text{Proj}_K(\xi-\eta) \neq \xi-\eta $ and hence we may consider the 2-dimensional plane $\Pi$ passing through the origin and the points $\xi-\eta$ and $\text{Proj}_K(\xi-\eta)$. Consider now the orthogonal matrix $O \in \text{O}(N,\R)$ which leaves the $N\!-\!2$-dimensional orthogonal complement of $\Pi$ invariant and coincides with the clockwise rotation by $\pi/2$ on $\Pi$ with respect to the orientation generated by the frame $\{\text{Proj}_K(\xi-\eta),\, \xi-\eta\}$ (See Figure 5). Then, we have
\[
\la_0\, :=\, O\, \text{Proj}_K(\xi-\eta) \  \in \, K^*,
\]
since the orthogonal complement of $\la_0$ in $\R^N$ is a hyperplane which supports $K$ at the origin. However, we have $\la_0 \cdot (\xi-\eta)<0$ because the angle between $\xi-\eta$ and $\la_0$ is greater than $\pi/2$, which contradicts our assumption. Hence,  we obtain $\xi-\eta\in K$, or equivalently $\xi \, \geq_{K} \eta$.      \qed
\[
\underset{\text{Figure 5.}}{\includegraphics[scale=0.19]{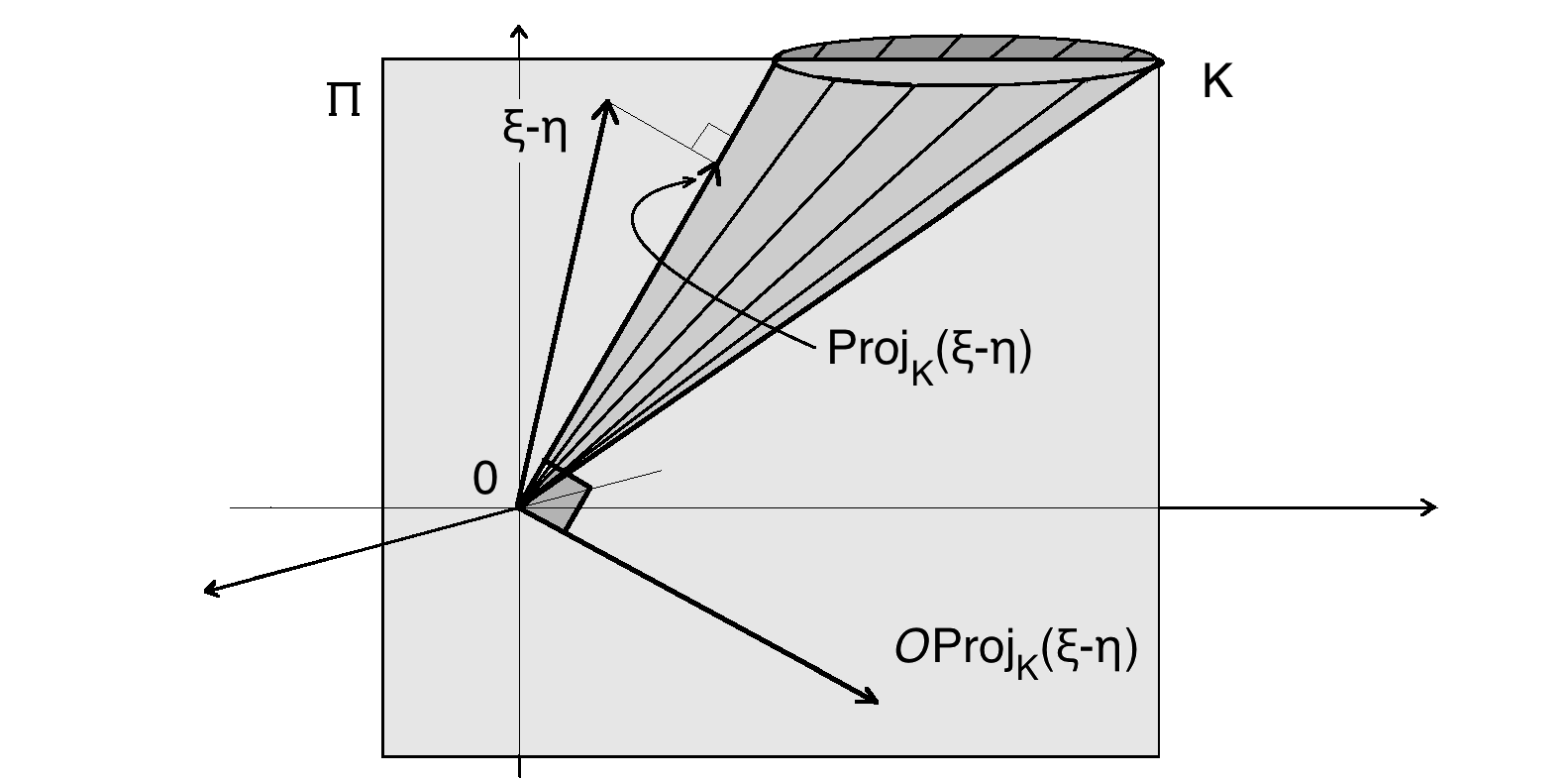}}
\]

\subsection{The Method of Scalarisation and Existence of Minimals.} Using the duality result of Lemma \ref{lemma1} above, we now derive characterisations of minima and minimals of a set with respect to an ordering generated by a proper cone.

\bl[Scalarisation of Minima and Minimals] \label{lemma2} Let $S\sub \R^N$ be a non-empty closed set and let $K\sub \R^N$ be a proper cone with $K^*$ its dual cone. 

Then, 

\begin{enumerate}
\item (Minima) $\xi$ is the minimum of $S$ with respect to the ordering $\leq_K$ if and only if for all $\la \in K^*$, $\xi$ is the minimum of all the linear scalar functions 
\[
\eta \mapsto \la\cdot \eta\ :\ \ \R^N \larrow \R, 
\] 
over the set $S$.

\item (Minimals)
\begin{itemize}
\item If for some $\la \in \inter(K^*)$, $\xi$ is the minimum of the linear scalar function
\[
\eta \mapsto \la\cdot \eta\ :\ \ \R^N \larrow \R, 
\] 
over the set $S$, then $\xi$ is a minimal of $S$ with respect to the ordering $\leq_K$. 

\item Conversely, if in addition the set $S$ is convex ($S=\text{co}(S)$) and $\xi$ is a minimal element of $S$, then for any $\la \in K^*$, $\xi$ is the minimum of the linear scalar function
\[
\eta \mapsto \la\cdot \eta\ :\ \ \R^N \larrow \R, 
\] 
over the set $S$.

\end{itemize}
\end{enumerate}
\el

\begin{remark} In view of $(1)$ above, it follows that minima of a set in general do not exist due to \emph{obstructions} which can be rephrased as the requirement to have simultaneous minimisation of a family of linear function and the minimum being realised at the same point for all the functions. On the other hand, $(2)$ says that for every direction strictly inside the dual cone, minimising the projection along this line leads to a minimal point for the set. The converse however to this statement is true in a weaker form and convexity plays a crucial role to that.
\end{remark}

Lemma \ref{lemma2} leads immediately to the following important consequence:

\bcor[Existence of Minimals] \label{corollary1} Let $K\sub \R^N$ be a proper cone, $K^*$ its dual cone and $\leq_K$ the ordering generated by $K$. Let also $S$ be a compact non-empty set. 

\ms

Then, there exists at least one minimal element $\xi \in S$ of the set $S$ with respect to the ordering $\leq_K$. 

\ms

Moreover, for any $\la \in \inter(K^*)$, consider the supporting hyperplane of $S$ which is normal to $\la$ and such that $\la$ points inside the halfspace which contains $S$. Then, the touching point belongs to the set of minimals of $S$.
\ecor
\[
\underset{\text{Figure 6.}}{\includegraphics[scale=0.19]{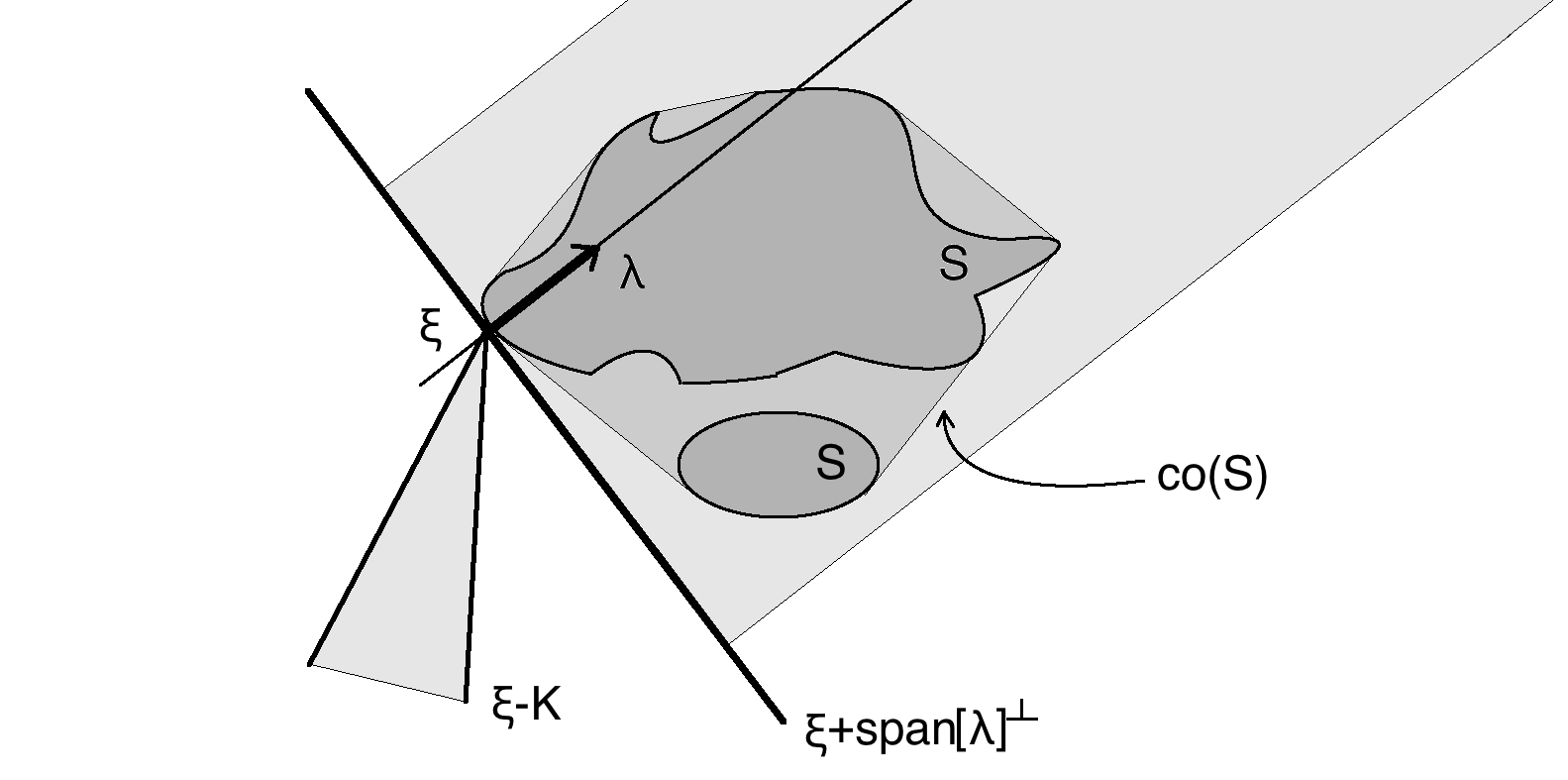}}
\]
It follows from the corollary that only the only the contact points of $S$ with its convex hull $\co(S)$ are ``attainable" candidate minimals by the scalarisation method, namely the set of points $S \cap \p (\co(S))$. 
\[
\underset{\text{Figure 7. $\xi$ is a minimal with respect to ``$\leq_{\R^2_+}$" in $\R^2$, but not attainable via scalarisation.}}{\includegraphics[scale=0.19]{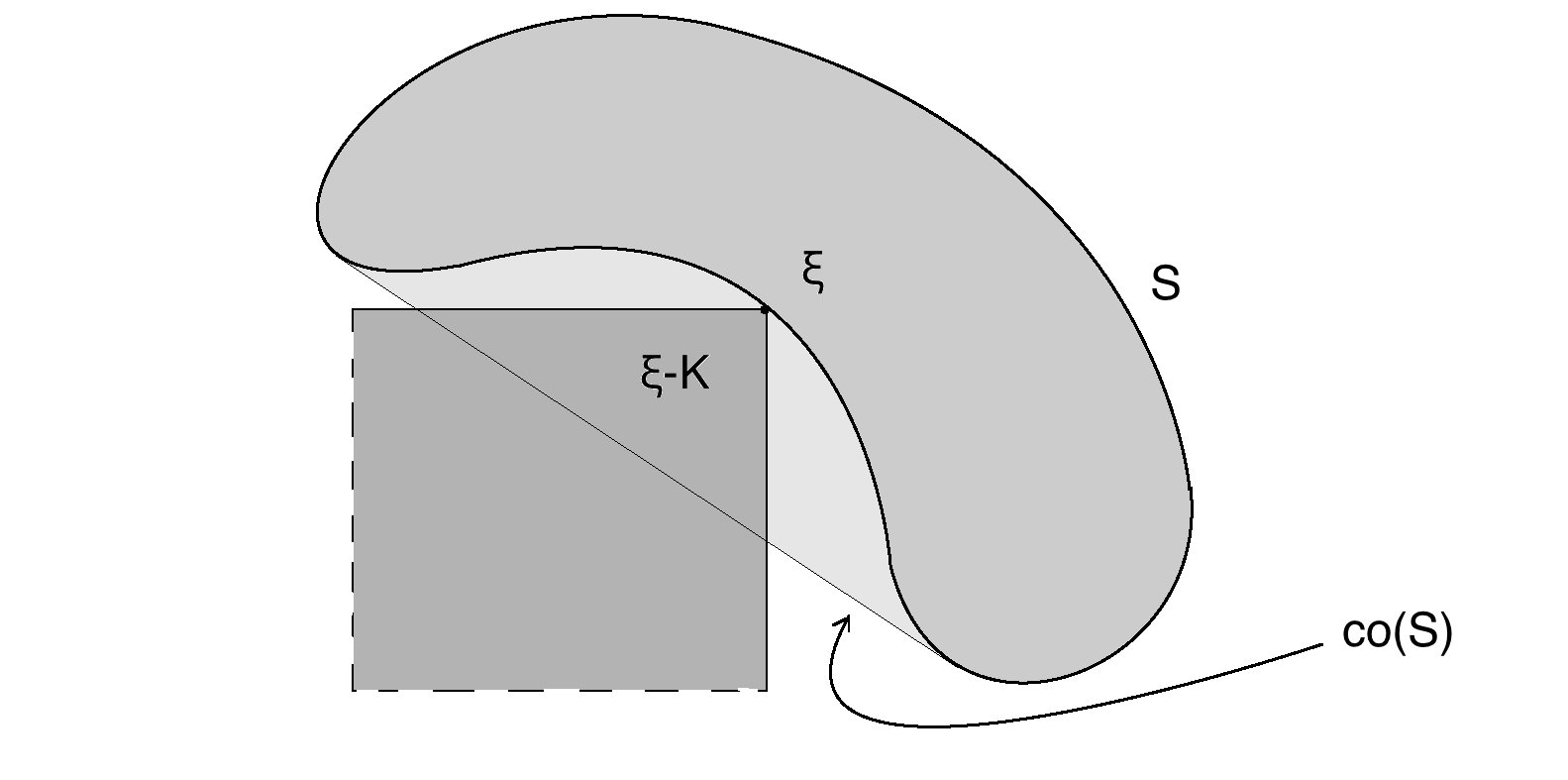}}
\]

\BPL \ref{lemma2}. We prove only the first statement of $(2)$, which is the only one we will use in the sequel. The proof of the rest claims can be found e.g. in \cite{BV}, page 54. We fix $\la \in K^*$ and suppose that $\xi$ is the minimum of the linear functional $\eta \mapsto \la \cdot \eta$ over $S$. We claim that $\{\xi\}=(\xi-K)\cap S$, which is equivalent to the statement that $\xi$ is a minimal of $S$. If this is not the case, then there is an $\eta \in S$ with $\eta \neq \xi$ such that $\eta \in (\xi-K)\cap S$. Since $\eta \in \xi-K$ and $\xi \neq \eta$, we have that $\xi-\eta \in K\set\{0\}$. Since $\la \in \inter(K^*)$, the definition of the dual cone implies that $\la\cdot(\xi-\eta)< 0$ (because $\la$ is in the interior and can not be normal to $\xi-\eta$), which is a contradiction. The claim ensues.    \qed

\subsection{Viscosity Solutions of Hamilton-Jacobi-Bellman Equations.} Here we recall the definition of appropriate ``weak" solutions for fully nonlinear (1st and) 2nd order degenerate elliptic (and parabolic) PDE. The notion is standard and can be found in many sources, e.g.\ in the standard reference \cite{CIL}. However, the version below is taken from the introductory text \cite{K}. The main difference is that degenerate ellipticity is assumed as monotonicity, instead of anti-monotonicity. Let 
\[
\cF \ :\  (\Om \sub \R^n) \by \R\by \R^n \by \mS(n)\larrow \R
\]
be a continuous function and consider the PDE
\[
\cF(\cdot,u,Du,D^2u)\, =\, 0, \ \ \ u\,:\,\Om\sub \R^n\larrow\R.
\] 
We assume that $F$ satisfies
\[
X\, \leq \,Y\ \ \ \Longrightarrow \ \ \ \cF(x,r,p,X)\,\leq\,\cF(x,r,p,Y),
\]
for all $(x,r,p)\in \Om \by \R\by \R^n$, $X,Y\in \mS(n)$.

\noi \begin{definition}[Viscosity Solutions] \label{def:visc-sol} Let $u\in C^0(\Om)$, $\Om\sub \R^n$, and consider the (degenerate elliptic) PDE 
\[
\cF\big(\cdot,u,Du,D^2u\big)\, =\, 0.
\]
(a) We say that $u$ is a Viscosity Subsolution of the PDE (or a Viscosity Solution of $\cF\big(\cdot,u,Du,D^2u\big)\geq 0$) when
\[ 
\left.
\begin{array}{c}
u-\psi \leq 0 =(u-\psi)(x_0)\\
 \text{on a ball }\mB_r(x_0) \sub \Om,\\
x_0 \in \Om,\ \psi \in C^2(\R^n)
\end{array}
\right\}
 \ \ \Longrightarrow \ \ \cF\big(x_0,\psi (x_0),D\psi(x_0),D^2\psi(x_0)\big) \geq 0.
\]
(b) We say that $u$ is a Viscosity Supersolution of the PDE (or a Viscosity Solution of $\cF\big(\cdot,u,Du,D^2u\big)\leq 0$) when
\[
\left.
\begin{array}{c}
u-\phi \geq 0 =(u-\phi)(y_0)\\
 \text{on a ball }\mB_r(y_0) \sub \Om,\\
y_0 \in \Om,\ \phi \in C^2(\R^n)
\end{array}
\right\}
 \ \ \Longrightarrow \ \ \cF\big(y_0,\phi (y_0),D\phi(y_0),D^2\phi(y_0)\big) \leq 0.
\]
(c) We say that $u$ is a Viscosity Solution, when it is both a Viscosity Subsolution and a Viscosity Supersolution.
\end{definition}

\ms

\section{Deterministic Optimal Control and Hamilton-Jacobi PDE} \label{section3}

In this section we consider the first main theme of this paper, namely
the deterministic optimal control of the initial value problem\footnote{Note that we conform with standard conventions as e.g.\ in the textbook \cite{E} and (by a time reversal) we have an initial instead of a terminal condition.}
\beq \label{3.1} \left\{
\begin{array}{l}
\dot{\chi}(s)\, = \, F\big(\chi(s),\al(s) \big), \ \ \ t<s<T,\ms\\
\chi(t)\, =\, x,
\end{array}
\right.
\eeq
where $x\in \R^n$, $A\sub \R^m$ is a compact set and $F: \R^n \by A \larrow \R^n$ a continuous map. $\al$ is a measurable map is the class $\mA$ given by \eqref{1.2}. Optimal controllability is meant with respect to the vectorial cost functional
\beq \label{3.2}
C_{x,t}[\al]\ =\ g\Big( \chi\big(T,x,\al(T) \big)\Big)\ +\ \int_t^T h\Big(\chi \big(s,x,\al(s) \big),\al(s)\Big)\, \d s
\eeq
where $h,\, g$ are given maps and  $\chi$ denotes the flow map of \eqref{3.1}, which we may abbreviate to merely $\chi(s)$, suppressing the dependence in $x$ and $\al$. We assume that $F,g,h$ satisfy the assumption \eqref{1.6}. 

\ms

We begin with the definition of the appropriate vectorial minimals we will use as our vectorial extension of the value function.

\bd[Pareto Minimals of the Vectorial Cost] \label{def-PM} Let $A\sub \R^m$ be a compact set, $\mA$ the class given by \eqref{1.2} and $F,g,h$ given maps which satisfy the assumption \eqref{1.6} with $n,m,N\geq 1$. 

Consider the initial value problem \eqref{3.1} and the vectorial cost functional \eqref{3.2}. Suppose further that we are given a partial ordering $\leq_K$ on $\R^N$ generated by a proper cone $K\sub \R^N$.

We say that the map $u:\R^n\by(0,T) \larrow \R^N$ is a vectorial value map (or a Pareto Minimal) of the cost \eqref{3.2} when for any $(x,t)\in \R^n\by(0,T)$, $u(x,t)$ is a Pareto Minimal of the set 
\[
S_{x,t}\, :=\, \overline{\big\{C_{x,t}[\al]\ |\ \al \in \mA\big\}}\, \sub \R^N, 
\]
that is, when 
\[
\text{$u(x,t) \in S_{x,t}$ and for all $\eta \in S_{x,t}$ for which $\eta\leq_Ku(x,t)$, we have $\eta=u(x,t)$.}
\]
 
\ed

By using the results of the previous section, in particular Corollary \ref{corollary1} and Lemma \ref{lemma2}, we readily have the next

\begin{lemma}[Existence of Pareto Minimals of the Vectorial Cost]  \label{lemma-PM}
In the setting of Definition \ref{def-PM}, there exists at least one value maps which is a Pareto Minimals of the vectorial cost. In addition, any direction $\la \in K^*$ generates a Pareto Minimal by considering the supporting hyperplane (of the convex hull of) $S_{x,t}$ which is normal to $\la$.
\end{lemma}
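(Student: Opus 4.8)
The plan is to reduce the statement to an application of Corollary \ref{corollary1}, so the bulk of the work is verifying that the set $S_{x,t}$ meets the hypotheses of that corollary, namely that it is a non-empty compact subset of $\R^N$. Once that is done, Corollary \ref{corollary1} immediately yields a minimal element $\xi\in S_{x,t}$; setting $u(x,t):=\xi$ for each $(x,t)$ produces a value map, and the final sentence of Corollary \ref{corollary1} gives the second assertion about directions $\la\in\inter(K^*)$ (and, via Lemma \ref{lemma2}(2), the statement extends to $\la\in K^*$ on the convex hull, as phrased).

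First I would record that $S_{x,t}$ is non-empty: the control class $\mA$ is non-empty (e.g.\ any constant control $\al\equiv a\in A$ lies in $\mA$), the IVP \eqref{3.1} has a unique solution for each $\al\in\mA$ by the Lipschitz and boundedness assumptions \eqref{1.6} (Carathéodory existence/uniqueness, since $\al$ is merely $L^\infty$), so $C_{x,t}[\al]$ is well defined and $S_{x,t}$ contains at least that point; being a closure, it is automatically closed. Next I would establish boundedness of $\{C_{x,t}[\al]:\al\in\mA\}$: from \eqref{1.6}, $|g|\le C$ and $|h|\le C$ pointwise, hence directly from \eqref{3.2}, $|C_{x,t}[\al]|\le C + C(T-t)\le C(1+T)$ uniformly in $\al$ and in $(x,t)$. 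Therefore $S_{x,t}$ is a closed bounded subset of $\R^N$, i.e.\ compact, and Corollary \ref{corollary1} applies verbatim.

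For the final clause I would invoke the ``moreover'' part of Corollary \ref{corollary1}: for $\la\in\inter(K^*)$, the point of $S_{x,t}$ at which a supporting hyperplane normal to $\la$ (with $\la$ pointing into the halfspace containing $S_{x,t}$) touches $S_{x,t}$ — equivalently, a minimizer over $S_{x,t}$ of the linear functional $\eta\mapsto\la\cdot\eta$ — is a minimal of $S_{x,t}$ with respect to $\leq_K$ by Lemma \ref{lemma2}(2). Since $S_{x,t}$ and $\co(S_{x,t})$ share their supporting hyperplanes, this touching point may equally be described via $\co(S_{x,t})$, which is the formulation in the statement; as the lemma is quantified over all $(x,t)$, making a choice of such a touching point for each $(x,t)$ produces a value map of the type claimed. (If one wishes to be careful about selecting $u$ measurably or continuously in $(x,t)$, that is a genuinely separate issue; but Definition \ref{def-PM} imposes no regularity on the value map, so no such selection argument is needed here.)

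The only mild subtlety — and the place I would be most careful — is the transition from the abstract closed set $S$ in Corollary \ref{corollary1} to the specific set $S_{x,t}$: one must check that taking the closure in the definition of $S_{x,t}$ does not move the infimum/supporting-hyperplane analysis, but this is immediate because all the relevant notions (support functions, minimals via Lemma \ref{lemma2}, compactness) are stable under closure of a bounded set. Thus there is no real obstacle; the lemma is essentially a bookkeeping corollary of the results of Section \ref{section2} once the uniform bound $|C_{x,t}[\al]|\le C(1+T)$ and the well-posedness of \eqref{3.1} are noted.
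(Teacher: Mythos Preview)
Your proposal is correct and takes essentially the same approach as the paper, which simply states that the lemma follows ``readily'' from Corollary~\ref{corollary1} and Lemma~\ref{lemma2} without further detail. You have just made explicit the verification that $S_{x,t}$ is non-empty and compact (via the bounds in \eqref{1.6}) so that Corollary~\ref{corollary1} applies, which is exactly the intended argument.
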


\begin{remark} \label{rem-PM} We note that in general, not all the Pareto minimals of the ordering can be realised via scalarisation, but instead only those on the ``extreme points" of the convex hull (see Section \ref{section2}). 
\end{remark}

We now have the next result, regarding the vector value functions:  

\begin{proposition}[Existence and properties of the vectorial value function]
  \label{prop:exist-det}
  Given a partial ordering $\leq_K$ on $\R^N$ generated by a proper
  cone $K\sub \R^N$, let $K^*$ be its dual cone (see Section
  \ref{section2}). Then, for any $\la\in K^*$ there exists a
  vectorial \emph{value function}
  \begin{equation}
    u^\la \ :\ \R^n\by (0,T)\larrow \R^N
  \end{equation}
  which is a Pareto Minimal of the functional (\ref{3.2})
  with respect to the ordering $\leq_K$ and also satisfies
  \begin{equation}
    \label{eq:stoch-oc}
    \la \cdot u^\la(x,t)\ =\ \inf_{\al \in \mA} \big\{\la\cdot C_{x,t}[\al]\big\} , \ \ \ x\in \R^n,\, 0<t<T.
  \end{equation}
  The scalar function $\la \cdot u^\la$ is the projection of the vectorial value map $u^\la$ along the direction generated by a supporting hyperplane of $S_{x,t}$ normal to $\la$. In addition,  $\la \cdot u^\la$ is bounded and Lipschitz continuous in both variables.
\end{proposition}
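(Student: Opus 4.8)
The plan is to combine the scalarisation results of Section~\ref{section2} with classical estimates for the flow of \eqref{3.1}. First I would fix $(x,t)\in\R^n\by(0,T)$ and note that under \eqref{1.6} the map $F$ is bounded and Lipschitz in $x$ uniformly in $a\in A$, so for each $\al\in\mA$ the Carath\'eodory solution $\chi(\cdot,x,\al)$ of \eqref{3.1} on $[t,T]$ exists and is unique, whence $C_{x,t}[\al]$ is well defined and $|C_{x,t}[\al]|\le|g(\chi(T))|+\int_t^T|h(\chi(s),\al(s))|\,\d s\le C(1+T)$. Thus $S_{x,t}=\overline{\{C_{x,t}[\al]:\al\in\mA\}}$ is non-empty, closed and bounded, i.e.\ compact. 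For $\la\in\inter(K^*)$, Lemma~\ref{lemma2}(2) (equivalently Corollary~\ref{corollary1}) shows that any point $\xi_{x,t}$ minimising the continuous linear functional $\eta\mapsto\la\cdot\eta$ over $S_{x,t}$ is a Pareto minimal of $S_{x,t}$ with respect to $\leq_K$; for $\la\in K^*\set\inter(K^*)$ one instead takes $\xi_{x,t}$ to be any Pareto minimal of the non-empty compact face $M_{x,t}:=\{\eta\in S_{x,t}:\la\cdot\eta=\min_{\zeta\in S_{x,t}}\la\cdot\zeta\}$, which is then automatically a Pareto minimal of $S_{x,t}$, since $\eta\in S_{x,t}$ with $\eta\leq_K\xi_{x,t}$ forces $\la\cdot\eta\le\la\cdot\xi_{x,t}$, hence $\eta\in M_{x,t}$. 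Setting $u^\la(x,t):=\xi_{x,t}$ for every $(x,t)$ defines a map $u^\la:\R^n\by(0,T)\larrow\R^N$ which, by Definition~\ref{def-PM}, is a vectorial value map; no measurability of this selection is needed, as the only regularity asserted below is for the scalar projection.

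Next I would record the scalarisation identity. Since $\eta\mapsto\la\cdot\eta$ is continuous, its minimum over the closed set $\overline{\{C_{x,t}[\al]:\al\in\mA\}}$ coincides with its infimum over $\{C_{x,t}[\al]:\al\in\mA\}$, so
\[
\la\cdot u^\la(x,t)\ =\ \min_{\eta\in S_{x,t}}\la\cdot\eta\ =\ \inf_{\al\in\mA}\big\{\la\cdot C_{x,t}[\al]\big\},
\]
which is \eqref{eq:stoch-oc}; moreover the hyperplane $\{\eta:\la\cdot\eta=\la\cdot u^\la(x,t)\}$ supports $S_{x,t}$ at $u^\la(x,t)$ with $\la$ pointing into the halfspace containing $S_{x,t}$, which is the asserted geometric description. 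Writing $w:=\la\cdot u^\la$, the bound $|C_{x,t}[\al]|\le C(1+T)$ gives $|w(x,t)|\le|\la|\,C(1+T)$, so $w$ is bounded.

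For Lipschitz continuity in $x$, fix $t$ and $x,y\in\R^n$; a standard Gr\"onwall estimate for the flow of \eqref{3.1} using \eqref{1.6} gives $|\chi(s,x,\al)-\chi(s,y,\al)|\le e^{CT}|x-y|$ on $[t,T]$, uniformly in $\al\in\mA$. The Lipschitz bounds on $g,h$ then yield $|\la\cdot C_{x,t}[\al]-\la\cdot C_{y,t}[\al]|\le|\la|\,C(1+T)e^{CT}|x-y|$, and taking infima over $\al$ (using $|\inf f-\inf g|\le\sup|f-g|$) gives the same bound for $|w(x,t)-w(y,t)|$. For Lipschitz continuity in $t$ I would exploit that $F,g,h$ contain no explicit $s$-dependence: relabelling the time variable shows that $w(x,t)$ depends on $t$ only through the ``time-to-go'' $T-t$, so with $v(x,\tau):=w(x,T-\tau)$ it suffices to estimate $|v(x,\tau_1)-v(x,\tau_2)|$ for $0\le\tau_2<\tau_1\le T$. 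Given an almost-optimal control for time-to-go $\tau_2$, prepend a constant control $a_0\in A$ on an interval of length $\tau_1-\tau_2$: since $|F|\le C$ the state moves by at most $C(\tau_1-\tau_2)$, so by Gr\"onwall the remaining trajectory is displaced by at most $e^{CT}C(\tau_1-\tau_2)$ and the extra running cost contributes at most $C(\tau_1-\tau_2)$, whence $v(x,\tau_1)\le v(x,\tau_2)+K(\tau_1-\tau_2)$; conversely, truncating an almost-optimal control for time-to-go $\tau_1$ to length $\tau_2$ and using $|\chi(\tau_1)-\chi(\tau_2)|\le C(\tau_1-\tau_2)$ together with the Lipschitz bound on $g$ and $|h|\le C$ gives $v(x,\tau_2)\le v(x,\tau_1)+K(\tau_1-\tau_2)$, with $K=K(C,T,|\la|)$. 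Combining the two directions shows $w$ is Lipschitz in $(x,t)$.

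I expect the Lipschitz-in-$t$ estimate to be the main obstacle: one must control simultaneously the moving integration interval and the drift of the flow, and must do so without invoking the dynamic programming principle of Proposition~\ref{prop:dynamic-opt-det} (which is proved only afterwards) — the autonomy of the data is precisely what makes the direct ``prepend/truncate a short constant control'' argument go through. A secondary point deserving care, handled above via the argmin face $M_{x,t}$, is that a minimiser of the scalar projection is a genuine Pareto minimal of $S_{x,t}$ also when $\la$ lies on the boundary of $K^*$.
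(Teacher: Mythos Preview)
Your proposal is correct and follows the same overall strategy as the paper: reduce to a scalar value function via the scalarisation results of Section~\ref{section2} and then invoke the standard boundedness and Lipschitz estimates for scalar value functions. The paper's proof, however, is a one-line reference---it simply cites Definition~\ref{def-PM}, Remark~\ref{rem-PM}, the scalarisation method, and the corresponding scalar result in Evans \cite[Section~10.3.2]{E}---whereas you supply the actual arguments (compactness of $S_{x,t}$, Gr\"onwall for the flow, and the prepend/truncate construction for Lipschitz continuity in $t$). In that sense your write-up is more self-contained than the paper's, not a different approach. One point worth highlighting: your treatment of the boundary case $\la\in K^*\setminus\inter(K^*)$, where you pass to a Pareto minimal of the argmin face $M_{x,t}$, is a genuine detail that the paper's proof glosses over, since Lemma~\ref{lemma2}(2) as stated only guarantees a Pareto minimal for $\la\in\inter(K^*)$.
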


\BPP \ref{prop:exist-det}. The proof is a direct consequence of Definition \ref{def-PM}, Remark \ref{rem-PM}, the scalarisation method of Section \ref{section2} and an application of the
corresponding scalar result given in \cite[Section 10.3.2]{E}.     \qed

We now state a proposition whose proof is straightforward extension of the
corresponding scalar result given in \cite[Section 10.3.2]{E} and is based on our previous analysis.

\begin{proposition}[Deterministic dynamic optimality]
  \label{prop:dynamic-opt-det}
  Let $A\sub \R^m$ be a compact set, $\mA$ the class given by
  \eqref{1.2} and $F,g,h$ are given maps which satisfy the assumption
  \eqref{1.6} with $n,m,N\geq 1$.  Then, for any $\de>0$ such that $t+\de \leq
  T$, we have
  \begin{equation}
    \la \cdot u^\la(x,t)\, =\, \inf_{\al\in A} \bigg\{ 
    \la\cdot u^\la\big(\chi(t+\de), t+\de\big) \, +\ \int_t^{t+\de} \la \cdot h(\chi(s),\al(s))\d s
    \bigg\},
  \end{equation}
  where $\chi$ is the solution of the differential equation
  (\ref{3.1}). 
\end{proposition}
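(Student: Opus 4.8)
The plan is to reduce the vectorial dynamic programming identity to the classical scalar dynamic programming principle by projecting along the fixed direction $\la \in K^*$. Fix $\la \in K^*$ and write $v(x,t) := \la \cdot u^\la(x,t)$. By Proposition \ref{prop:exist-det} we know $v(x,t) = \inf_{\al \in \mA}\{\la \cdot C_{x,t}[\al]\}$, and moreover, since $\la$ is linear and the cost $C_{x,t}[\al]$ in the deterministic case is the vector
\[
g\big(\chi(T)\big) + \int_t^T h(\chi(s),\al(s))\,\d s,
\]
we have $\la \cdot C_{x,t}[\al] = \la\cdot g(\chi(T)) + \int_t^T \la\cdot h(\chi(s),\al(s))\,\d s$. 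Thus $v$ is exactly the value function of a \emph{scalar} deterministic optimal control problem with the same dynamics \eqref{3.1}, scalar running cost $(x,a)\mapsto \la\cdot h(x,a)$ and scalar terminal cost $x\mapsto \la\cdot g(x)$. Note these scalar data inherit boundedness and Lipschitz continuity from \eqref{1.6} (with constant multiplied by $|\la|$), so the hypotheses of the scalar theory in \cite[Section 10.3.2]{E} are met.

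The key step is then to invoke the scalar dynamic programming principle for $v$: for $\de>0$ with $t+\de\leq T$,
\[
v(x,t) = \inf_{\al \in \mA}\bigg\{ v\big(\chi(t+\de),t+\de\big) + \int_t^{t+\de} \la\cdot h(\chi(s),\al(s))\,\d s\bigg\}.
\]
The standard argument has two halves. For ``$\leq$'': given any $\al\in\mA$, splitting the integral at $t+\de$ and concatenating $\al|_{[t,t+\de]}$ with a near-optimal control for the subproblem started at $(\chi(t+\de),t+\de)$ produces an admissible control on $[t,T]$, which gives the inequality after taking infima and letting the near-optimality error go to zero; here one uses that the flow map of \eqref{3.1} satisfies the semigroup/concatenation property, i.e. $\chi(s,x,\al)$ restricted to $[t+\de,T]$ is the flow started from $\chi(t+\de)$. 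For ``$\geq$'': any control on $[t,T]$ restricts to a control on $[t,t+\de]$ whose continuation is admissible for the later subproblem, so its cost is bounded below by the right-hand side; take the infimum. Finally I would reconcile the infimum over $\mA$ with the stated infimum ``$\inf_{\al\in A}$'' — this is a mild abuse of notation in the statement (the minimisation is over controls $\al\in\mA$ on $[t,t+\de]$, with $\chi$ the corresponding flow), exactly as in the scalar HJB setup, and no further argument is needed.

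The main obstacle is essentially bookkeeping rather than anything deep: one must verify carefully that the concatenation of admissible controls is admissible (which is immediate from the definition \eqref{1.2} of $\mA$ as $L^\infty$ maps valued in $A$) and that the flow map enjoys the required uniqueness and semigroup property on subintervals, which follows from Lipschitz continuity of $F$ in $x$ via Picard–Lindelöf together with measurability and boundedness of $\al$. Everything else is a direct transcription of the scalar dynamic programming principle, and since Proposition \ref{prop:exist-det} has already identified $v = \la\cdot u^\la$ with a bona fide scalar value function, the result follows at once from \cite[Section 10.3.2]{E}. $\qed$
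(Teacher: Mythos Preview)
Your proposal is correct and follows exactly the approach the paper takes: the paper does not give a detailed proof but simply states that the result is a ``straightforward extension of the corresponding scalar result given in \cite[Section 10.3.2]{E}'' based on the previous analysis (i.e., Proposition \ref{prop:exist-det} identifying $\la\cdot u^\la$ with a scalar value function). Your write-up is in fact more detailed than the paper's treatment, spelling out the two inequalities and the concatenation/semigroup verification that the paper leaves implicit.
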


The above result is a consequence of the dynamic programming principle. Roughly speaking, it states that optimal cost through the entire time interval $[t,T]$ can be achieved by running optimally in $[t,t+\de]$ and  then restarting the problem at time $t+\de$ with initial conditions
  $\chi(t+\de)$. 

\ms

We now come to the main result of this section, which is the following:

\bt[Vectorial Optimal Control, Pareto Minimals, Viscosity Solutions of HJ PDE]  \label{theorem1}

Let the conditions of Propositions \ref{prop:exist-det} and
\ref{prop:dynamic-opt-det} hold. Consider the
initial value problem \eqref{3.1} and the vectorial cost functional
\eqref{3.2}. 

Then, for any unit direction $\la \in K^*$, the projection $\la \cdot
u^\la(x,t)$ is a scalar function and a Viscosity Solution of the
initial value problem
\[
\left\{
\begin{array}{r}
v_t\ +\ H^\la(\cdot,Dv)\, =\, 0, \ \ \ \ \ \text{ in }\R^n\by (0,T),\ms\\
v\, =\, \la\cdot g,\ \text{ on }\R^n\by \{0\}.\ \ 
\end{array}
\right.
\]
Here the Hamiltonian $H^\la$ is defined by
\[
H^\la(x,p)\, := \, \min_{a\in A}\big\{F(x,a)\cdot p\, +\, \la\cdot h(x,a) \big\}.
\]

\et

\BPT \ref{theorem1}. The proof is a direct consequence of the definition of Minimals, the results of duality/scalarisation of Section \ref{section2}, Propositions \ref{prop:exist-det} and \ref{prop:dynamic-opt-det} and the standard scalar control theory (see e.g. Evans \cite{E}, pages 550-560). It is also a special case of the proof of Theorem \ref{thm:hjb} which will be proved is some detail in the subsequent section in the more general stochastic case with non-trivial noise.    \qed

\ms

\section{Stochastic Optimal Control and Hamilton-Jacobi-Bellman PDE} \label{section4}

In this section we extend the ideas of Section \ref{section3} to the
stochastic case. For the rudimentary facts of the theory of SDEs needed in this paper we refer to Evans \cite{E2}. We consider now the case of optimal control of the system of stochastic
differential equation
\beq
   \label{eq:sde}
  \left\{
\begin{array}{l}
    \d {\chi}(s) \,= \, F\big(\chi(s),\al(s)\big) \d s\ +\ \sigma\big(\chi(s),\al(s)\big) \d W(s), \quad t<s<T,\ms
    \\
 \ \ \   \chi(t) \,=\, x,
  \end{array}
\right.  \eeq where we use the same notation as in Section
\ref{section3}. The extra ingredients now are $W(s)$ which is an $m$
dimensional system of independent Wiener processes and $\sigma:\R^n
\times A \larrow \R^{n\times m}$ which is a continuous map. In the stochastic
case, the flow map $\chi$ has to be interpreted as a stochastic
process. This process is defined over a \emph{probability space}
which is a triple $(\R^n, \cA, P)$, with $\cA$ a $\sigma$--algebra of
subsets of $\R^n$ and $P$ a probability measure over $\R^n$.

The stochastic process $\chi$ is said to solve the stochastic
differential equation (\ref{eq:sde}) if 
\begin{equation}
  \chi(s)\ =\ x \, + \int_t^s F\big(\chi(r),\al(r)\big) \d r \ + \int_t^s \sigma\big(\chi(r),\al(r)) \d W(r),
\end{equation}
interpreted as an It$\hat{\textrm{o}}$ integral, holds almost surely for all $s\in (t,T)$. 

The stochastic version of the vectorial cost functional has the same
form as the deterministic, however is given in terms of an expectation
\begin{equation}
  \label{eq:stoch-cost}
  C_{x,t}[\al] \ =\ \Exp\qb{ g\Big( \chi\big(T,x,\al(T) \big)\Big)\ +\, \int_t^T h\Big(\chi \big(s,x,\al(s) \big),\al(s)\Big)\, ds},
\end{equation}
which is defined as% with respect to $P$
\begin{equation}
  \Exp\qb{X} := \int_{\R^n} X \d P.
\end{equation}

The following two Propositions are the stochastic equivalents to
Propositions \ref{prop:exist-det} and \ref{prop:dynamic-opt-det}:

\begin{proposition}[Existence and properties of the vectorial value function]
  \label{prop:exist-sto}
  Given a partial ordering $\leq_K$ on $\R^N$ generated by a proper
  cone $K\sub \R^N$, let $K^*$ be its dual cone (see Section
  \ref{section2}). Then, for any $\la\in K^*$ there exists a
  vectorial \emph{value function}
  \[
    u^\la \ :\ \R^n\by (0,T)\larrow \R^N
  \]
  which is a Pareto Minimal of the functional (\ref{eq:stoch-cost})
  with respect to the ordering $\leq_K$ and also satisfies
  \begin{equation}
    \label{eq:stoch-oc}
    \la \cdot u^\la(x,t)\ =\ \inf_{\al \in \mA} \big\{\la\cdot C_{x,t}[\al]\big\} , \ \ \ x\in \R^n,\, 0<t<T.
  \end{equation}
  In addition, the value function is bounded and Lipschitz continuous in both variables.
\end{proposition}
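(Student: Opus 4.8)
The plan is to reduce the stochastic vectorial statement to the classical scalar stochastic-control theory via the scalarisation machinery of Section \ref{section2}, exactly as was done in the deterministic Proposition \ref{prop:exist-det}. First I would fix $\la \in K^*$ and define the scalar cost functional $C^\la_{x,t}[\al] := \la \cdot C_{x,t}[\al] = \Exp\qb{ \la\cdot g(\chi(T)) + \int_t^T \la\cdot h(\chi(s),\al(s))\,\d s}$. Since $\la\cdot g$ and $\la\cdot h$ inherit boundedness and Lipschitz continuity in $x$ (uniformly in $a\in A$) from \eqref{1.6} — indeed with constant $|\la|\,C$ — the standard scalar stochastic-control result (e.g.\ \cite{FS} or \cite{E2}) applies verbatim: the scalar value function $w^\la(x,t) := \inf_{\al\in\mA} C^\la_{x,t}[\al]$ is well defined, bounded, and Lipschitz continuous in $(x,t)$, with bounds controlled by $|\la|\,C$ and $T$.

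Next I would produce the vectorial map $u^\la$ and verify it is a Pareto Minimal. For each $(x,t)$ the set $S_{x,t} = \overline{\{C_{x,t}[\al] \mid \al\in\mA\}} \sub \R^N$ is closed, and it is bounded because $|C_{x,t}[\al]| \leq C(1+T)$ uniformly in $\al$ by \eqref{1.6}; hence $S_{x,t}$ is compact. By Lemma \ref{lemma2}(2), since $w^\la(x,t) = \inf_{\eta\in S_{x,t}} \la\cdot\eta$ is the minimum over $S_{x,t}$ of the linear functional $\eta\mapsto\la\cdot\eta$, any point $\xi\in S_{x,t}$ realising this minimum is a Pareto Minimal of $S_{x,t}$ with respect to $\leq_K$ — here we use $\la\in\inter(K^*)$; for $\la\in K^*\setminus\inter(K^*)$ one perturbs or invokes the boundary version as in Corollary \ref{corollary1} and Remark \ref{rem-PM}, so strictly speaking the clean statement is for $\la\in\inter(K^*)$ with the general case understood modulo the attainability caveat of Remark \ref{rem-PM}. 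Define $u^\la(x,t)$ to be such a minimiser (compactness guarantees existence; if several exist, pick one, e.g.\ by a measurable selection so that $(x,t)\mapsto u^\la(x,t)$ is a genuine map). By construction $\la\cdot u^\la(x,t) = w^\la(x,t) = \inf_{\al\in\mA}\{\la\cdot C_{x,t}[\al]\}$, which is \eqref{eq:stoch-oc}, and $\la\cdot u^\la = w^\la$ is bounded and Lipschitz; this is the precise sense in which ``the value function is bounded and Lipschitz continuous'' — it is the scalar projection $\la\cdot u^\la$ that has these properties, matching the phrasing in Proposition \ref{prop:exist-det}.

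The main obstacle is not any single hard estimate but rather the bookkeeping of two subtleties. The first is the measurable (or continuous) selection of $u^\la(x,t)$ among the minimisers of the linear functional over $S_{x,t}$: one wants $u^\la$ to be an honest map $\R^n\by(0,T)\to\R^N$, and while the minimising face of the compact convex-hull boundary varies upper-semicontinuously in $(x,t)$, producing a single-valued selection requires either a selection theorem or an arbitrary but fixed choice, which is harmless for the statement. The second is the role of $\inter(K^*)$ versus $K^*$: the converse direction of Lemma \ref{lemma2}(2) needs convexity of the set, and $S_{x,t}$ need not be convex, so one cannot claim every minimal arises this way (this is exactly Remark \ref{rem-PM}), but the forward direction — which is all that is needed here — only requires $\la\in\inter(K^*)$. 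I would therefore state the proof as: invoke the scalar result of \cite{FS}/\cite{E2} for $\la\cdot g$, $\la\cdot h$ to get $w^\la$; set $u^\la(x,t)$ to be a point of $S_{x,t}$ with $\la\cdot u^\la(x,t)=w^\la(x,t)$, which exists by compactness of $S_{x,t}$; apply Lemma \ref{lemma2}(2) to conclude $u^\la(x,t)$ is a Pareto Minimal; and read off \eqref{eq:stoch-oc} and the Lipschitz/boundedness of $\la\cdot u^\la$ directly. No genuinely new analytic input beyond the scalar theory is required; the content is the translation through duality.
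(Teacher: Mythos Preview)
Your proposal is correct and follows essentially the same route as the paper: the paper gives no separate proof for this proposition, treating it as the stochastic analogue of Proposition~\ref{prop:exist-det}, whose proof is the one-line reduction ``Definition~\ref{def-PM} $+$ Remark~\ref{rem-PM} $+$ scalarisation of Section~\ref{section2} $+$ the corresponding scalar result''. You have simply unpacked that sentence --- defining $w^\la$, checking compactness of $S_{x,t}$, invoking Lemma~\ref{lemma2}(2), and flagging the $\inter(K^*)$ versus $K^*$ and selection issues --- in more detail than the paper itself provides.
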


\begin{proposition}[Stochastic dynamic optimality]
  \label{prop:dynamic-opt-sto}
  Let $A\sub \R^m$ be a compact set, $\mA$ the class given by
  \eqref{1.2} and $F,g,h, \sigma$ given maps which satisfy the
  assumption \eqref{1.6} with $n,m,N\geq 1$. 
  A consequence of the dynamic programming principle is that for any
  $\de > 0$ such that $t + \de \leq T$, we have
  \[
    \la \cdot u^\la(x,t) \ = \ \inf_{\al\in A} \bigg\{
    \Exp
    \qb{
      \la\cdot u^\la(\chi(t+\de), t+\de) \ + \int_t^{t+\de} \la \cdot h(\chi(s),\al(s))\d s
    }
    \bigg\},
  \]
  where $\chi$ is the solution to the system of stochastic differential equations \eqref{eq:sde}. 
\end{proposition}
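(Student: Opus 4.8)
\noi The plan is to reduce the statement to the scalar dynamic programming principle and then run the classical two-inequality argument. First I would observe that, by Proposition \ref{prop:exist-sto}, the scalar function $w(x,t):=\la\cdot u^\la(x,t)$ coincides with the value function
\[
w(x,t)\ =\ \inf_{\al\in\mA}\big\{\la\cdot C_{x,t}[\al]\big\}
\]
of the \emph{scalar} stochastic optimal control problem for the system \eqref{eq:sde} with running cost $\la\cdot h:\R^n\by A\larrow\R$ and terminal cost $\la\cdot g:\R^n\larrow\R$. Since $\la\in K^*$ is fixed, assumption \eqref{1.6} is inherited by $\la\cdot h$ and $\la\cdot g$ (with $C$ replaced by $C|\la|$), so all the standard hypotheses of scalar stochastic control theory hold and one may simply invoke the scalar dynamic programming principle, e.g.\ \cite[Section 10.3.2]{E} or Fleming--Soner \cite{FS}. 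Below I sketch the argument for completeness.

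Denote the right-hand side of the claimed identity by
\[
R(x,t)\ :=\ \inf_{\al\in\mA}\,\Exp\qb{\,\la\cdot u^\la\big(\chi(t+\de),t+\de\big)\ +\ \int_t^{t+\de}\la\cdot h\big(\chi(s),\al(s)\big)\d s\,},
\]
where $\chi$ solves \eqref{eq:sde} with data $(x,t)$ and control $\al$. For the inequality $w(x,t)\ge R(x,t)$ I would fix $\e>0$, pick $\al\in\mA$ with $\la\cdot C_{x,t}[\al]\le w(x,t)+\e$, and split the cost at time $t+\de$. Using the flow property $\chi(s,x,\al)=\chi\big(s,\chi(t+\de,x,\al),\al\big)$ for $s\ge t+\de$ together with the tower property of conditional expectation (the Markov property of the controlled diffusion), one obtains
\[
\la\cdot C_{x,t}[\al]\ =\ \Exp\qb{\int_t^{t+\de}\la\cdot h\big(\chi(s),\al(s)\big)\d s\ +\ \la\cdot C_{\chi(t+\de),\,t+\de}[\al]}.
\]
Since $\la\cdot C_{y,t+\de}[\be]\ge \inf_{\be'}\la\cdot C_{y,t+\de}[\be']=w(y,t+\de)$ for every $y$, taking expectations yields $w(x,t)+\e\ge R(x,t)$, and letting $\e\to 0$ gives the first inequality.

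For the reverse inequality $w(x,t)\le R(x,t)$ I would, given an arbitrary $\al\in\mA$ and $\e>0$, select for each $y\in\R^n$ a control $\be^y\in\mA$ that is $\e$-optimal at $(y,t+\de)$, i.e.\ $\la\cdot C_{y,t+\de}[\be^y]\le w(y,t+\de)+\e$, then concatenate $\al$ on $[t,t+\de]$ with $\be^{\chi(t+\de)}$ on $[t+\de,T]$ to obtain an admissible control $\ga\in\mA$. Comparing $w(x,t)\le\la\cdot C_{x,t}[\ga]$, splitting again at $t+\de$, taking the infimum over $\al$ and then $\e\to0$, gives $w(x,t)\le R(x,t)$; combining the two inequalities proves the Proposition. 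The main obstacle in making this fully rigorous is precisely the concatenation step: one must choose the family $\{\be^y\}$ in a measurable way (a measurable-selection argument) and, more importantly, ensure that the glued control $\ga$ remains non-anticipating/progressively measurable with respect to the underlying filtration so that $\ga$ is genuinely admissible. This is the standard technical point of the stochastic DPP, and it is where one appeals to the classical references (Fleming--Soner, Krylov) rather than reproving it here.
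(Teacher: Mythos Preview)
Your proposal is correct and is in fact more detailed than what the paper itself offers: the paper does not prove this Proposition at all, but simply states it as a direct consequence of the scalar dynamic programming principle (in the same way that the deterministic analogue, Proposition~\ref{prop:dynamic-opt-det}, is declared a ``straightforward extension of the corresponding scalar result'' in \cite[Section~10.3.2]{E}). Your reduction via Proposition~\ref{prop:exist-sto}---observing that $\la\cdot u^\la$ is exactly the scalar value function for the running cost $\la\cdot h$ and terminal cost $\la\cdot g$, so that the scalar DPP applies verbatim---is precisely the mechanism the paper has in mind, and your two-inequality sketch with the honest flag on the measurable-selection/concatenation step is the standard way to unpack the cited references.
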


The main result of this section is:

\begin{theorem}[Viscosity Solutions of HJB PDE]
  \label{thm:hjb}
  Consider the stochastic initial value problem (\ref{eq:sde})
 and the vectorial cost functional (\ref{eq:stoch-cost}).
  For each $\la \in K^*$, the projection $\la \cdot u^\la(x,t)$ is a
  scalar function and a Viscosity Solution of the initial value
  problem
  \begin{equation}
    \label{eq:HJB}
    \left\{
    \begin{array}{r}
      v_t + H^\la(\cdot,Dv,D^2 v) = 0, \qquad \ \text{  in }\R^n\by (0,T),\ms
      \\
      v = \la\cdot g, \quad \text{ on }\R^n\by \{0\},\ \ 
    \end{array}
    \right.
  \end{equation}
  where the Hamiltonian $H^\la$ is defined by
  \begin{equation}
    H^\la(x,p,Z)\, := \ \min_{a\in A}\left\{ \frac{1}{2}\sigma(x,a) \Transpose{\sigma(x,a)} : Z \,+ \, F(x,a)\cdot p\, +\, \la\cdot h(x,a) \right\}.
  \end{equation}
\end{theorem}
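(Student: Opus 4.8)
The plan is to reduce the vectorial problem to a family of scalar stochastic optimal control problems indexed by $\la \in K^*$, and then invoke the classical theory. By Proposition \ref{prop:exist-sto}, for fixed $\la\in K^*$ the scalar function $v^\la(x,t) := \la\cdot u^\la(x,t)$ satisfies
\[
v^\la(x,t)\ =\ \inf_{\al\in\mA}\Exp\qb{\la\cdot g\big(\chi(T)\big) + \int_t^T \la\cdot h\big(\chi(s),\al(s)\big)\d s},
\]
which is precisely the value function of a \emph{scalar} stochastic optimal control problem for the system \eqref{eq:sde} with scalar running cost $(x,a)\mapsto \la\cdot h(x,a)$ and scalar terminal cost $x\mapsto \la\cdot g(x)$. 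Since $\la$ is a fixed vector, the composite maps $\la\cdot h$ and $\la\cdot g$ inherit boundedness and Lipschitz continuity in $x$ (uniformly in $a\in A$) from the assumptions \eqref{1.6} on $h$ and $g$, with Lipschitz constant at most $|\la|\,C$; also $F$ and $\sigma$ are unchanged and still satisfy \eqref{1.6}. Hence the hypotheses of the standard scalar HJB theory are met.

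The steps I would carry out are as follows. First, I would record that $v^\la$ is bounded and Lipschitz in $(x,t)$ (already asserted in Proposition \ref{prop:exist-sto}), so that it is a legitimate candidate for a viscosity solution and in particular continuous. Second, I would write out the dynamic programming principle for $v^\la$, which is exactly Proposition \ref{prop:dynamic-opt-sto}. Third, I would perform the now-classical derivation: take a smooth test function $\phi\in C^2(\R^n\by(0,T))$ touching $v^\la$ from above (resp. below) at an interior point $(x_0,t_0)$, plug the DPP of Proposition \ref{prop:dynamic-opt-sto} into the inequality $v^\la\le\phi$ (resp. $\ge$), apply It\^o's formula to $s\mapsto \phi\big(\chi(s),s\big)$ along the controlled process, divide by $\de$ and let $\de\downarrow 0$. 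For the subsolution inequality one freezes a constant control $a\in A$ and uses the resulting upper bound for the infimum; for the supersolution inequality one uses a near-optimal control and the lower bound together with a measurable selection / continuity argument for the running minimum over the compact set $A$. This yields, at $(x_0,t_0)$,
\[
\phi_t(x_0,t_0) + \min_{a\in A}\left\{\tfrac12\sigma(x_0,a)\ot\sigma(x_0,a):D^2\phi(x_0,t_0) + F(x_0,a)\cdot D\phi(x_0,t_0) + \la\cdot h(x_0,a)\right\}\ \gtrless\ 0,
\]
i.e. $\phi_t + H^\la(\cdot,D\phi,D^2\phi)\gtrless 0$, which is the required viscosity property in the sign convention of Definition \ref{def:visc-sol}. (One should double-check the sign/monotonicity convention: $H^\la$ is nondecreasing in $Z$ because $\sigma\ot\sigma\geq 0$, so the operator $\cF(x,r,p,Z)=-p_t-H^\la$ — or rather the parabolic version with time — is degenerate elliptic in the monotone sense used in the paper.) Finally, the initial condition $v^\la=\la\cdot g$ on $\R^n\by\{0\}$ follows by continuity of $v^\la$ up to $t=0$ together with the representation formula, since as $t\uparrow 0$ the integral term vanishes and $\chi(T)\to\chi(0)=x$ in $L^2$ uniformly in $\al$ by the Gronwall/It\^o estimates for \eqref{eq:sde}.

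The main obstacle is the passage to the limit $\de\downarrow 0$ in the viscosity derivation, specifically controlling the stochastic terms. Two technical points require care: (i) the It\^o correction term, which after taking expectations and dividing by $\de$ produces the second-order term $\tfrac12\sigma\ot\sigma:D^2\phi$ evaluated along $\chi(s)$ — one needs the continuity of $\chi(s)\to x_0$ in $L^2$ as $s\to t_0$, uniformly over controls, to replace it by the value at $x_0$; and (ii) interchanging the infimum over $\al\in\mA$ (adapted processes) with the limit and with the expectation, which is where the reduction to the pointwise minimum over the compact set $A\subset\R^m$ enters and uses the continuity of $a\mapsto F(x_0,a),\sigma(x_0,a),h(x_0,a)$. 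These are exactly the steps carried out in \cite[Section 10.3]{E} and \cite{FS} in the scalar case, and the whole point is that \emph{nothing in that argument sees the vectorial structure} once $\la$ is fixed: the derivation is applied verbatim to the scalar data $(\la\cdot h,\la\cdot g)$. The only genuinely new input is the preceding scalarisation machinery (Lemma \ref{lemma2}, Corollary \ref{corollary1}, Propositions \ref{prop:exist-sto} and \ref{prop:dynamic-opt-sto}) which guarantees that the scalar value $\la\cdot u^\la$ really is the value function of this auxiliary scalar problem and simultaneously a Pareto minimal of the vector cost.
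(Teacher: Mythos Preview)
Your proposal is correct and follows essentially the same route as the paper: reduce via Propositions \ref{prop:exist-sto} and \ref{prop:dynamic-opt-sto} to a scalar value function with data $(\la\cdot h,\la\cdot g)$, then run the standard viscosity argument by fixing a constant control for the subsolution side, applying It\^o's formula to the test function along the controlled process, dividing by $\de$ and passing to the limit --- exactly as the paper does (the paper also argues the subsolution inequality by contradiction with a constant control and dismisses the supersolution half with ``similarly''). One small slip: your justification of the boundary datum is backwards --- the integral term vanishes as $t\to T$, not as $t\uparrow 0$, and the paper indeed checks $u(x,T)=\la\cdot g(x)$ (the ``$\{0\}$'' in the statement is a notational quirk of the paper itself).
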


\begin{proof}
  The proof of this result is just an extension of \cite[p.557 Thm
  2]{E} to the stochastic vectorial case. We include it here for
  completeness. Throughout this proof for convenience we will denote $u
  := \la\cdot u^\la$. We begin by noting that \eqref{eq:stoch-oc} implies 
  \begin{equation}
    u(x,T) \ =\ \inf_{a\in A} \big\{\Exp 
    [ \la \cdot g(\chi(T))] \big\}
    \ =\ 
    \la\cdot g(x).
  \end{equation}
  To show that $u$ is the viscosity solution of (\ref{eq:HJB}) we must
  verify the conditions given in Definition \ref{def:visc-sol}. To that
  end, without loss of generality let $v\in\cont{2}(\R^n
  \times (0,T))$ and assume there exist $(x_0, t_0)$ such that 
  \begin{equation}
    \label{eq:ass2}
    u(x,t) - v(x,t) \leq u(x_0, t_0) - v(x_0,t_0),\text{ when } \norm{x - x_0} + \norm{t - t_0} \leq \epsilon.
  \end{equation}
  We now want to show that
  \begin{equation}
    \begin{split}
      v_t(x_0,t_0)
      \,+\,
      H^\la(x_0, D v, D^2 v)
      \,\geq\, 0.
    \end{split}
  \end{equation}
  Assume for the sake of contradiction that there exists an $a\in A$ such that
  \begin{equation}
    \label{eq:ass}
    \begin{split}
      \cL v(x_0, t_0) 
      &\ :=\ 
     v_t(x_0,t_0)
      \,+\,
      D v(x_0,t_0) \cdot F(x_0,a) 
      \\
      &\qquad +
      \frac{1}{2} D^2 v(x_0,t_0) : \big( \sigma(x_0,a) \Transpose{\sigma(x_0,a)} \big) \,+\, \la \cdot h(x_0,a)
      \ <\ 0
    \end{split}
  \end{equation}
  for $\norm{x - x_0} + \norm{t - t_0} \leq \epsilon$. Let $\chi(s)$ be
  the solution of the stochastic differential equation
  \begin{equation}
    \begin{split}
      \d {\chi}(s) &\ =\  F\big(\chi(s),a \big) \d s \ +\ \sigma\big(\chi(s),a \big) \d W(s), \quad t<s<T,\ms
      \\
      \chi(t_0) &\ =\ x_0,
    \end{split}
  \end{equation}
  where $a$ is now taken to be a contant control. Now choose $\delta\in (0,\epsilon)$ such that
  \begin{equation}
    \norm{\chi(s) - x_0}\, \leq \, \epsilon, \text{ \ \ when\  } s\in [t_0,t_0 + \delta],
  \end{equation}
  then by (\ref{eq:ass}) we know
  \begin{equation}
    \cL v(\chi(s), s)\, <\, 0, \text{\ \ for } s\in  [t_0,t_0 + \delta].
  \end{equation}
  Using (\ref{eq:ass2}) we have that
  \begin{equation}
    \label{eq:stoch-proof-1}
    \begin{split}
      u\big(\chi(t_0+\de), t_0+\de\big) \,- \, u(x_0, t_0)
      &\ \leq \ v\big(\chi(t_0+\de),t_0+\de\big) \, -\, v(x_0,t_0)
      \\
      &\ \leq \ \int_{t_0}^{t_0+\de} \d v(\chi(s),s).
    \end{split}
  \end{equation}
  From Proposition \ref{prop:dynamic-opt-sto}, we have
  \begin{equation}
    \label{eq:stoch-proof-2}
    u(x_0,t_0)  \ \leq \ \Exp \qb{
      u(\chi(t_0+\de), t_0+\de) \,+ \, \int_{t_0}^{t_0+\de} \la \cdot h(\chi(s),a)\d s
    }.
  \end{equation}
  Combining (\ref{eq:stoch-proof-1}) and (\ref{eq:stoch-proof-2}) and
  dividing through by $\de$ we see  
  \begin{equation}
    \label{eq:stoch-proof-3}
    \begin{split}
    0 \
    &\ \leq
    \frac{1}{\de}
    \Exp 
    \qb{
      \int_{t_0}^{t_0+\de} \d v(\chi(s),s)
    }
    \ + \
    \frac{1}{\de}
    \Exp
    \qb{
      \int_{t_0}^{t_0+\de} \la \cdot h(\chi(s),a)\d s
    }
    \\
    &=: \
    \cE_1(\de) \ +\ \cE_2(\de).
    \end{split}
  \end{equation}
  The first term, in view of the It$\hat{\textrm{o}}$ chain rule is
  \begin{equation}
    \begin{split}
      \cE_1(\de) &\ =\ 
      \frac{1}{\de}
      \Exp 
      \qb{
        \int_{t_0}^{t_0+\de} \d \big( v(\chi(s),s) \big)
      }
      \\
      &=\ 
      \frac{1}{\de}
      \Exp 
      \bigg[
        \int_{t_0}^{t_0+\de} v_t(\chi(s),s) \d s 
        +
        D v(\chi(s),s) \cdot \d \chi(s)
        \\
        &\qquad\qquad\qquad\qquad +
        \frac{1}{2} D^2 v(\chi(s),s) : \big( \d \chi(s) \Transpose{\d \chi(s)} \big)
      \bigg]
      \\
      &=\ 
      \frac{1}{\de}
      \Exp 
      \bigg[
        \int_{t_0}^{t_0+\de} v_t(\chi(s),s) 
        +
        D v(\chi(s),s) \cdot F(\chi(s),s) 
        \\
        &\qquad\qquad\qquad\qquad +
        \frac{1}{2} D^2 v(\chi(s),s) : \big( \sigma(\chi(s),a) \Transpose{\sigma(\chi(s),a)} \big)
        \d s 
        \\
        &\qquad\qquad\qquad\qquad +
        D v(\chi(s),s) \cdot \sigma(\chi(s),a) \d W(s)
        \bigg].
    \end{split}
  \end{equation}
  Now using Fubini's theorem we have
  \beq 
    \label{eq:stoch-proof-4}
  \begin{array} {r} 
   \underset{\de\ri 0}\lim\,
    \cE_1(\de)
   \ =\ 
      v_t(x_0,t_0)
    \ +\ 
    D v(x_0,t_0) \cdot F(x_0,a) 
   \qquad \ms\\
    +\ \dfrac{1}{2} D^2 v(x_0,t_0) : \big( \sigma(x_0,a) \Transpose{\sigma(x_0,a)} \big).
  \end{array}
  \eeq
  
 The second term, again by Fubini's theorem, gives
  \begin{equation}
    \label{eq:stoch-proof-5}
    \begin{split}
      \lim_{\de\ri 0}
      \cE_2(\de)
      &\ =\ 
      \lim_{\de\ri 0}
      \frac{1}{\de}
      \Exp
      \qb{
        \int_{t_0}^{t_0+\de} \la \cdot h(\chi(s),a)\d s
      }
      \\
      &\ =\ 
      \lambda \cdot h(x_0,a).
    \end{split}
  \end{equation}
  Substituting (\ref{eq:stoch-proof-4}) and (\ref{eq:stoch-proof-5})
  into (\ref{eq:stoch-proof-3}), we have
  \begin{equation}
    0 \, \leq \, \cL v(x_0, t_0),
  \end{equation}
  which contradicts (\ref{eq:ass}). Consequently, $u$ is a Viscosity
  Subsolution of (\ref{eq:HJB}). In a similar fashion one can show
  that $u$ is a Viscosity Supersolution to (\ref{eq:HJB}), hence $u$
  is a Viscosity Solution of (\ref{eq:HJB}). 
\end{proof}

\section{Applications to Linear-Quadratic problems}
\label{sec:applications}

We conclude this exposition with an application of the main ideas to a sample problem, that of Linear-Quadratic models. This is a prototypical example arising in optimal control applicable in many areas, data assimilation being one such example \cite{GM}. To demonstrate the approach and the differences between the scalar case, i.e., when the cost functional is scalar ($N=1$) and the non scalar case, i.e., when the cost functional is vectorial ($N > 1$). We will present both cases.

In the following suppose the optimal control problem with dynamics governed by the SDE (\ref{eq:sde}) together with cost functional (\ref{eq:stoch-cost}) take the specific form
\begin{equation}
  \label{eq:LQmodel}
  \begin{split}
    \d \chi(s) &= \qp{A \chi(s) + B\alpha(s)} \d s + \sigma \d W(s)
    \\
    C_{x,t}[\al] &= \Exp\qb{ \Transpose{\chi(T)} \, Q_T \, \chi(T) + \int_t^T \frac{1}{2} \Transpose{\al(s)} \, R \, \al(s) + \frac{1}{2} \Transpose{\chi(s)} \, Q \, \chi(s) ds}.
  \end{split}
\end{equation}
This is precisely the Linear-Quadratic model so named as the dynamics are described via a \emph{linear} SDE and \emph{quadratic} cost functional.
\begin{theorem}[Linear-Quadratic models]
  \label{the:lqmodel}
  Let $N=1$, then suppose $A,B,Q,Q_T,R \in \R^{n^2}$ are symmetric, positive definite matrices and the noise $\sigma \in \R^n$ is additive Gaussian. Then the solution to the stochastic Linear-Quadratic model (\ref{eq:LQmodel}) is itself quadratic and takes the form:
  \begin{equation}
    \label{eq:uisquad}
    u(x,t) = \frac{1}{2} \Transpose{x} \, U(t) \, x + b(t),
  \end{equation}
  if and only if $b(t) \in \R, U(t)\in\R^{n^2}$ solve the following (backward in time) initial value problems
  \begin{equation}
    \begin{split}
      \label{eq:IVPsys}
      &\left\{
      \begin{array}{l}
        -\dot{b}(s)\, = \, \frac{1}{2} \sigma \Transpose{\sigma} : U(s), \ \ \ t<s<T,\ms\\
        \, b(T)\, =\, 0.
      \end{array}
      \right.
      \\
      &\left\{
      \begin{array}{l}
        -\dot{U}(s)\, = \, Q + 2 \, \Transpose{A} \, U(s)
        -
        \Transpose{U(s)} \,B \, \Transpose{\qp{R^{-1}}} \,\Transpose{B} \,U(s), \ \ \ t<s<T,\ms\\
        \, U(T)\, =\, Q_T.
      \end{array}
      \right.
    \end{split}
  \end{equation}
\end{theorem}

\begin{proof}
  Making use of Theorem \ref{thm:hjb} we have that (as $N=1$) the solution, $u$, is a viscosity solution to (\ref{eq:HJB}). Now using the specific form of $u$ from (\ref{eq:uisquad}) we may compute that
  \begin{equation}
    \begin{split}
      u_t(x,s) &= \frac{1}{2} \Transpose{x} \, \dot U(s) \, x + \dot b(s)
      \\
      D u(x,s) &= U(s) \, x
      \\
      D^2 u(x,s) &= U(s).
    \end{split}
  \end{equation}
  Substituting this into the Hamilton-Jacobi-Bellman equation (\ref{eq:HJB}) we see that
  \begin{equation}
    \begin{split}
      0&=
      \frac{1}{2} \Transpose{x} \, \dot U(s) \, x + \dot b(s)
      +
      \min_{a\in A}\bigg\{ \frac{1}{2}\sigma \Transpose{\sigma} : U(s)
      +
      \Transpose{\qp{A \, x + B \, a}} \, U(s) \, x
      \\ 
      &\qquad\qquad\qquad\qquad\qquad\qquad\qquad +
      \frac{1}{2} \Transpose{a} \, R \, a + \frac{1}{2} \Transpose{x} \, Q \, x      
      \bigg\}.
    \end{split}
  \end{equation}
  Since the Hamiltonian is quadratic in $a$ then the minimum can be explicitly computed. Indeed,
  \begin{equation}
    a^*(x,s) = - R^{-1} \, \Transpose{B} \, U(s) \, x
  \end{equation}
  is the minimiser of the Hamiltonian, thus we see, using elementary but tedious linear algebra that
  \begin{equation}
    \begin{split}
      0 &=
      \frac 1 2 \Transpose{x} \, \dot U(s) \, x + \dot b(s)
      +
      \frac 1 2 \sigma \Transpose{\sigma} : U(s)
      +
      \Transpose{\qp{A \, x + B \, \qp{-R^{-1}\, \Transpose{B} \, U(s) \, x}}}\, U(s) \, x
      \\
      &\qquad +
      \frac 1 2 \Transpose{\qp{R^{-1} \, \Transpose{B} \, U(s) \, x}} \, R \, \qp{R^{-1} \, \Transpose{B} \, U(s) \, x}
      +
      \frac 1 2 \Transpose{x} \, Q \, x
      \\
      &=
      \frac 1 2 \Transpose{x} \, \dot U(s) \, x + \dot b(s)
      +
      \frac 1 2 \sigma \Transpose{\sigma} : U(s)
      \\
      &\qquad +
      \Transpose{x} \qb{ 2 \Transpose{A} U(s) - \Transpose{U(s)} B \qp{\Transpose{R}}^{-1}\Transpose{B} U(s) + q} x.
    \end{split}
  \end{equation}
  Now since we require the equality to hold for all $x$ the value function $u$ can only be quadratic if and only if the backward in time ODEs are satisfied. The end time conditions occur from matching the final time value of the cost functional, that is, considering $C_{x,T}$ in (\ref{eq:LQmodel}), concluding the proof.
\end{proof}

\begin{remark}[The deterministic LQ model is equivalent]
  Notice the equation for $U$ in (\ref{eq:IVPsys}) is a matrix valued Riccati equation and is independent of $\sigma$. This means that the optimal control ($a^*$ in the proof of Theorem \ref{the:lqmodel}) must be the same in the deterministic case and hence the same Riccati equation can be derived. In this scenario $u$ is called the Linear-Quadratic Regulator.
\end{remark}

\begin{theorem}[The vectorial case]
  Let $N>1$, then suppose $A,B\in \R^{n^2}$ are symmetric, positive definite matrices and the noise $\sigma$ is constant in time additive Gaussian. In addition suppose that $Q,Q_T,R \in \R^{n^2 N}$. Now for every $\lambda \in K^*$ such that $\lambda \cdot Q, \lambda \cdot Q_T, \lambda \cdot R \in \R^{n^2}$ remain symmetric positive definite matrices, the projection $\lambda \cdot u^{\lambda}$ is quadratic and takes the form
  \begin{equation}
    u^\lambda(x,t) = \frac 1 2 \Transpose{x} \, U^\lambda(t) \, x + b(t),
  \end{equation}
  if and only if $b(t) \in \R, U^\lambda(t)\in\R^{n^2}$ solve the following (backward in time) initial value problems
  \begin{equation}
    \begin{split}
    \label{eq:IVPsysvec}
      &\left\{
      \begin{array}{l}
        -\dot{b}(s)\, = \, \frac{1}{2} \sigma \Transpose{\sigma} : U(s), \ \ \ t<s<T,\ms\\
        \, b(T)\, =\, 0.
      \end{array}
      \right.
      \\
      &\left\{
      \begin{array}{l}
        -\dot U^\lambda(s)
        \, = \,
        \lambda \cdot Q
        +
        2 \, \Transpose{A} \, U^\lambda(s)
        -
        \Transpose{U^\lambda(s)} \, B \, \Transpose{\qp{\qp{\lambda\cdot R}^{-1}}} \, \Transpose{B} \, U^\lambda(s), \ \ \ t<s<T,\ms\\
        U^\lambda(T) = \lambda\cdot Q_T.
      \end{array}
      \right.
    \end{split}
  \end{equation}
\end{theorem}
\begin{proof}
  The proof of this fact consists of applying the same arguments as that of Theorem \ref{the:lqmodel}, together with those presented in Theorem \ref{thm:hjb}.
\end{proof}

\section{Conclusion}
In this work we have summarised an approach aimed at proving existence of solutions to optimal control problems with \emph{vectorial} cost functionals. The main idea of this approach is that, given a vectorial cost functional, one can \emph{select} a direction in $\R^N$ along which to minimise the cost as long as that direction does not leave the problem degenerate. Then for any direction one prove existence of a viscosity solution and, in the case of LQ control, write down solutions. It was our primary intention of deriving the model equations which we conjectured to be systems of HJB equations but discovered, using this approach, are scalarised equations. It was not the goal of this exposition to show the uniqueness of such a solution, which is completely nontrivial, if indeed true at all.

\ms

\vskip 6mm
\noindent{\bf Acknowledgements}

\noindent   
N.K. has been partially supported through the EPSRC grant number EP/N017412/1.
T.P. has been partially supported through the EPSRC grant number EP/P000835/1.

\bibliographystyle{amsplain}

\end{document}